\DeclareFontFamily{OMX}{MnSymbolE}{}
\DeclareSymbolFont{MnLargeSymbols}{OMX}{MnSymbolE}{m}{n}
\DeclareFontShape{OMX}{MnSymbolE}{m}{n}{
    <-6>  MnSymbolE5
   <6-7>  MnSymbolE6
   <7-8>  MnSymbolE7
   <8-9>  MnSymbolE8
   <9-10> MnSymbolE9
  <10-12> MnSymbolE10
  <12->   MnSymbolE12
}{}
\DeclareFontShape{OMX}{MnSymbolE}{b}{n}{
    <-6>  MnSymbolE-Bold5
   <6-7>  MnSymbolE-Bold6
   <7-8>  MnSymbolE-Bold7
   <8-9>  MnSymbolE-Bold8
   <9-10> MnSymbolE-Bold9
  <10-12> MnSymbolE-Bold10
  <12->   MnSymbolE-Bold12
}{}
\let\llangle\@undefined
\let\rrangle\@undefined
\DeclareMathDelimiter{\llangle}{\mathopen}%
                     {MnLargeSymbols}{'164}{MnLargeSymbols}{'164}
\DeclareMathDelimiter{\rrangle}{\mathclose}%
                     {MnLargeSymbols}{'171}{MnLargeSymbols}{'171}
\newtheorem{theorem}{Theorem}[section]
\newtheorem{lemma}[theorem]{Lemma}
\newtheorem{cor}[theorem]{Corollary}
{\theoremstyle{remark}
\newtheorem{remark}[theorem]{Remark}}
\theoremstyle{remark}}
\theoremstyle{definition}
\newtheorem{definition}[theorem]{Definition}}
\theoremstyle{remark}
\newtheorem{claim}{Claim}}
\def\Z{\mathbb Z}
\renewcommand{\to}{\longrightarrow}
\newcommand{\ch}{\mathop{\mathrm{Ch}}\nolimits}
\newcommand{\rmv}[1]{}
\newcommand{\Irr}{\mathop{\mathrm{Irr}}\nolimits}
\numberwithin{equation}{section}
\begin{document}

\title{Character Theory of Monoids over an Arbitrary Field}

\author{Ariane M. Masuda, Luciane Quoos \and Benjamin Steinberg}
\address[A.~M.~Masuda]{Department of Mathematics \\ New York City College of Technology\\ 300 Jay Street\\ Brooklyn, NY 11201\\ USA}
\email{amasuda@citytech.cuny.edu}
\address[L. Quoos]{Instituto de Matem\'atica \\ Universidade Federal do Rio de Janeiro \\ Cidade Universit\'aria \\ Rio de Janeiro, RJ 21941-909\\ Brazil }
\email{luciane@im.ufrj.br}
\address[B. Steinberg]{Department of Mathematics\\
    City College of New York\\
    Convent Avenue at 138th Street\\
    New York, NY 10031\\
    USA}
\email{bsteinberg@ccny.cuny.edu}
\thanks{The first author was partially supported by PSC-CUNY grants (\#66133-00 44 and \#67111-00 45).
 The second author was partially supported by Siemens, project INST.MAT.-18.414. The third author was partially supported by a grant from the Simons Foundation (\#245268
to Benjamin Steinberg), the Binational Science Foundation of Israel and the US (\#2012080), and a CUNY Collaborative Incentive Research Grant. Some of this work was performed while the first, second and third authors were visiting UFRJ, CUNY and IMPA, respectively; they gratefully acknowledge the warm hospitality they received.}
\date{October 26, 2014}
\keywords{character theory, representation theory, monoids}
\subjclass[2010]{20M30,20M35,16G99}

\begin{abstract}
The basic character theory of finite monoids over the complex numbers was developed in the sixties and seventies based on work of Munn, Ponizovsky, McAlister, Rhodes and Zalcstein.  In particular, McAlister determined the space of functions spanned by the irreducible characters of a finite monoid over $\mathbb C$ and the ring of virtual characters.  In this paper, we present the corresponding results over an arbitrary field.

As a consequence, we obtain a quick proof of the theorem of Berstel and Reutenauer that the characteristic function of a regular cyclic language is a virtual character of the free monoid. This is a crucial ingredient in their proof of the rationality of the zeta function of a sofic shift in symbolic dynamics.
\end{abstract}
\maketitle

\section{Introduction}
The representation theory of finite monoids has enjoyed a rebirth over the past two decades~\cite{Putcharep5,Putcharep3,Putcharep1,Putcharep4,Putcharep2,gmsrep,mobius1,mobius2,Saliola,rrbg,Jtrivialpaper,bihecke,DO,MSS,BergeronSaliola}, in a large part due to applications to Markov chain theory~\cite{BHR,Brown1,Brown2,DiaconisBrown1,DiaconisAthan,DiaconisICM,AyyerKleeSchilling,bjorner1,bjorner2,Saliolaeigen,GrahamChung,ayyer_klee_schilling.2013,ayyer_schilling_steinberg_thiery.sandpile.2013,ayyer_schilling_steinberg_thiery.2013}, data analysis~\cite{Malandro1,Malandro2,Malandro3} and automata theory~\cite{AMSV,mortality,transformations}.  This paper is a further contribution to the representation theory of finite monoids.

We say that two elements $a$ and $b$ of a finite monoid $M$ are character equivalent over a field $K$ if every irreducible character of $M$ over $K$ agrees on $a$ and $b$.  Character equivalence for the field of complex numbers was described by McAlister~\cite{McAlisterCharacter}, see also~\cite{RhodesZalc} and~\cite{Mazorchuk}. The arguments of~\cite{McAlisterCharacter}, in fact, work over any algebraically closed field of characteristic $0$.  In this paper we  determine character equivalence for an arbitrary field and we prove that the irreducible characters form a basis for the algebra of $K$-valued functions on $M$ which are constant on character equivalence classes.  As is often the case in the representation theory of finite monoids, the result is obtained via a reduction to the case of finite groups, which was essentially handled by Berman~\cite{Berman}.

Recall that a virtual character is a difference of two characters.  The virtual characters of $M$ over $K$ form a ring and we show that it is isomorphic to the direct product of the rings of virtual characters of maximal subgroups of $M$ over $K$ (one per regular $\mathcal D$-class), as is the case over $\mathbb C$~\cite{McAlisterCharacter}.  Moreover, we show that a function that is constant on character equivalence classes is a virtual character if and only if its restriction to each maximal subgroup is a virtual character.

As an application of our results, we give a character theoretic proof of a result of Berstel and Reutenauer~\cite{BerstelReutenauer}, which they put to good use in order to prove the rationality of the zeta function of a sofic shift in symbolic dynamics~\cite{MarcusandLind}. More precisely, if $A^*$ is the free monoid on a finite set $A$, then a language $L\subseteq A^*$ is said to be cyclic if:
\begin{enumerate}[i)]
\item $u\in L\iff u^n\in L$ for all $n>0$;
\item $uv\in L\iff vu\in L$.
\end{enumerate}
For example, if $\mathcal X$ is a subshift of $A^{\mathbb Z}$, then the set $L$ of all words $w$ such that $\cdots ww.ww\cdots\in \mathcal X$ is cyclic.  Recall that a language $L\subseteq A^*$ is regular if it is accepted by a finite state automaton~\cite{EilenbergA}.  Berstel and Reutenauer proved that the characteristic function $A^*\to \{0,1\}$ of a regular cyclic language $L\subseteq A^*$ is a virtual character of $A^*$ (over any field).  From this, they easily deduced the rationality of the zeta function of $L$ and hence the rationality of zeta functions of sofic shifts.

In a recent paper~\cite{Perrincomplred} Perrin gave another proof that the characteristic function of a regular cyclic language is a $K$-linear combination of irreducible characters over any algebraically closed field $K$ of characteristic $0$ using the character theory results of McAlister~\cite{McAlisterCharacter}.  This motivated us to look for a proof of the original result of Berstel and Reutenauer using the character theory of finite monoids over an arbitrary field.

\section{Characters of monoids over an arbitrary field}

Let $M$ be a monoid and $K$  a field of characteristic $p \geq 0$.  A \emph{representation} of $M$ over $K$ is a monoid homomorphism $\varphi\colon  M \to M_r(K)$.  One says that $r$ is the \emph{degree} of $\varphi$. A representation is \emph{irreducible} if there is no proper nonzero subspace $V$  of $K^r$ such that $\varphi(m)\cdot v \in V$ for all $m\in M$ and $v\in V$. Mostly we shall assume that $M$ is finite, except in Section~\ref{s:application}.

We say that two representations $\varphi_1$ and $\varphi_2$ of degree $r$ are {\emph{equivalent}}, written $\varphi_1 \sim \varphi_2$, if there exists an invertible matrix $A \in M_r(K)$ such that $A\varphi_1(m)A^{-1}=\varphi_2(m)$ for all $m \in M$.  The set of all equivalence classes of irreducible representations of $M$ over $K$ is denoted $\Irr_K(M)$.

The \emph{character} of a representation $\varphi$ is the map $\chi_\varphi\colon M \to K $ given by \[\chi_\varphi(m) =\mathrm{Tr}(\varphi(m))\] where $\mathrm{Tr}(A)$ denotes the trace of a matrix $A$. Notice that if the degree $r$ is $1$,  then the character agrees with the representation. An \emph{irreducible character} is the character of an irreducible representation.  If $\varphi_1\sim \varphi_2$, then $\chi_{\varphi_1}=\chi_{\varphi_2}$.

The following theorem is Theorem~9.22 in~\cite{Isaacs}.

\begin{theorem}\label{indepgroupchar}
If $\chi_1, \ldots, \chi_m$ are characters of inequivalent irreducible representations of a finite group $G$ over a field $K$, then they are linearly independent and, in particular, non-zero and distinct.
\end{theorem}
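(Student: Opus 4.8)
The plan is to deduce the statement from the Wedderburn structure of the group algebra $KG$: the whole matter reduces to the non‑vanishing of a single trace function, and this non‑vanishing is the only delicate point, delicate only in the modular case.

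First I would extend each $\varphi_i$ (afforded on $V_i=K^{n_i}$) linearly to a $K$‑algebra homomorphism $KG\to\mathrm{End}_K(V_i)$. By Schur's lemma $D_i=\mathrm{End}_{KG}(V_i)$ is a division ring, and by the Jacobson density theorem together with finite‑dimensionality the image $B_i:=\varphi_i(KG)$ equals $\mathrm{End}_{D_i}(V_i)$, a simple algebra whose unique simple module is $V_i$; since $\mathrm{rad}(KG)$ annihilates $V_i$, the map factors through $KG/\mathrm{rad}(KG)$. A simple algebra determines its simple module, so inequivalence of $\varphi_1,\dots,\varphi_m$ forces $\ker\varphi_1,\dots,\ker\varphi_m$ to be pairwise distinct; hence the $B_i$ are distinct Wedderburn components of $KG/\mathrm{rad}(KG)$ and $\Phi=(\varphi_1,\dots,\varphi_m)\colon KG\to\prod_{i=1}^m B_i$ is surjective. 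Extended linearly to $KG$, each $\chi_i$ is the composite $KG\xrightarrow{\varphi_i}B_i\hookrightarrow\mathrm{End}_K(V_i)\xrightarrow{\mathrm{Tr}}K$; in particular $\chi_i$ vanishes on $\ker\varphi_i\supseteq\ker\Phi$, so it descends through $\Phi$ to a function on $\prod_j B_j$ which is ``supported on the $i$‑th factor'', namely $b\mapsto\mathrm{tr}_i(\pi_i(b))$ with $\mathrm{tr}_i(b)=\mathrm{Tr}_K(b\mid_{V_i})$. Consequently a relation $\sum_i a_i\chi_i=0$ over $K$ pushes, via the surjection $\Phi$, to $\sum_i a_i\,\mathrm{tr}_i\circ\pi_i=0$ on $\prod_j B_j$, and evaluating at elements concentrated in one coordinate yields $a_i\,\mathrm{tr}_i\equiv 0$ on $B_i$ for every $i$. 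Thus the theorem — including the ``non‑zero'' assertion, and the ``distinct'' one applied to $\chi_i-\chi_j$ — follows once one knows:
$(\star)$ for every simple Wedderburn component $B$ of $KG/\mathrm{rad}(KG)$, with simple module $V$, the function $b\mapsto\mathrm{Tr}_K(b\mid_V)$ is not identically zero.

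To prove $(\star)$: if $\mathrm{char}\,K=0$ or $\mathrm{char}\,K\nmid|G|$ then $KG$ is semisimple, and the symmetric bilinear form $(x,y)\mapsto\mathrm{Tr}_K(\text{left multiplication by }xy)$ has Gram matrix $|G|$ times a permutation matrix in the group basis, hence is non‑degenerate; it is an orthogonal sum over the Wedderburn components, so its restriction to $B$ is non‑degenerate, and since this restriction equals $e$ times the trace form of $V$ (writing $B\cong V^{\oplus e}$ as a $B$‑module) we get both $e\neq 0$ in $K$ and $\mathrm{tr}_V\not\equiv 0$. In general I would first show that $KG/\mathrm{rad}(KG)$ is a \emph{separable} $K$‑algebra: over the prime field $K_0$, which is perfect, $K_0G/\mathrm{rad}(K_0G)$ is semisimple hence separable, and since a separable algebra remains semisimple under any base change one obtains $\mathrm{rad}(KG)=K\otimes_{K_0}\mathrm{rad}(K_0G)$, so $KG/\mathrm{rad}(KG)=K\otimes_{K_0}\big(K_0G/\mathrm{rad}(K_0G)\big)$ is separable over $K$. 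Writing $B\cong M_e(D)$ with $D$ a division algebra with centre $L$, separability forces $L/K$ to be separable, so $\mathrm{Tr}_{L/K}\not\equiv 0$; invoking the classical fact that in characteristic $p$ the Schur index $\deg D$ is coprime to $p$ (so $\deg D$ is invertible in $K$, and $\mathrm{Tr}_{D/L}=(\deg D)\,\mathrm{Trd}_{D/L}\not\equiv 0$ since the reduced trace is surjective onto $L$), a direct computation with a matrix unit gives $\mathrm{Tr}_K(\cdot\mid_V)$ of the form $\mathrm{Tr}_{L/K}\big((\deg D)\,\mathrm{Trd}_{D/L}(\cdot)\big)$ on a copy of $D$ inside $B$, which is nonzero for a suitable argument; in characteristic $0$ the factor $\deg D$ is automatically invertible, so no Schur‑index input is needed there.

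The main obstacle is precisely $(\star)$ in the non‑semisimple case — everything else is bookkeeping with $\Phi$. The two substantive inputs are (i) separability of $KG/\mathrm{rad}(KG)$ over $K$ and (ii) coprimality of Schur indices to $p$, and neither can be dispensed with: if some component $B\cong M_e(D)$ had inseparable centre or $p\mid\deg D$, then $\mathrm{Tr}_{D/K}\equiv 0$, so the corresponding character would vanish identically — already a nontrivial linear relation — and the theorem would be false. (Note that separability of $B$ by itself does not rule out $p\mid\deg D$: a degree‑$p$ division algebra with separable centre is a separable $K$‑algebra yet has identically zero trace form; it is the group‑algebra hypothesis, via (ii), that excludes this.) Both (i) and (ii) are standard — in essence due to Brauer — and once they are granted the proof is a short diagram chase.
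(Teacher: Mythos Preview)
The paper does not prove this theorem at all: it simply quotes it as Theorem~9.22 of Isaacs' \emph{Character Theory of Finite Groups} and moves on, using it as a black box in the monoid arguments that follow. So there is no ``paper's own proof'' to compare against here.

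Your Wedderburn--theoretic argument is correct. The reduction via the surjection $\Phi\colon KG\twoheadrightarrow\prod_i B_i$ to the single assertion $(\star)$ is clean and standard, and your treatment of $(\star)$ is right: separability of $KG/\mathrm{rad}(KG)$ over $K$ (obtained by descent to the perfect prime field and base change back) gives $\mathrm{Tr}_{L/K}\not\equiv 0$ on each centre $L$, and the classical fact that Schur indices of finite groups in characteristic $p$ are prime to $p$ makes the factor $\deg D$ invertible, so $\mathrm{Tr}_{D/K}\not\equiv 0$ and hence $\mathrm{tr}_V\not\equiv 0$ on $B$. Your closing remark correctly isolates exactly where the group--algebra hypothesis enters: over a general finite--dimensional $K$--algebra one can have simple modules with identically vanishing $K$--trace, and it is precisely the two Brauer--type inputs (i) and (ii) that exclude this for $KG$. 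For what it is worth, Isaacs' own proof also passes through the Wedderburn decomposition, so your approach is in the same spirit as the cited reference, just with the separability and Schur--index ingredients made more explicit than a textbook would.
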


It is well known~\cite[Theorem~27.22]{curtis} that if $K$ is an algebraically closed field of characteristic $0$, then the number of irreducible characters of a finite group $G$ over $K$ is the number of conjugacy classes.  Moreover, the irreducible characters form a basis for the space of functions $f\colon G\to K$ constant on conjugacy classes. McAlister~\cite{McAlisterCharacter} obtained the appropriate generalization to representations of finite monoids over an algebraically closed field; the result was obtained at the same time independently by Rhodes and Zalcstein, but only published many years later~\cite{RhodesZalc};  see also~\cite{Mazorchuk}.  Our goal is to extend McAlister's results to arbitrary fields.

\begin{definition}
Let $K$ be a field and $M$ a monoid. We say that $m_1, m_2 \in M$ are {\emph{character equivalent}} if $\chi(m_1)=\chi(m_2)$ for all characters $\chi$ of $M$, or equivalently for all irreducible characters $\chi$ of $M$, over $K$.
\end{definition}

For example, if $K=\mathbb C$ and $M$ is a finite group, then character equivalence is conjugacy.

Let $M$ be a finite monoid. Then $e \in M$ is an {\emph{idempotent}} if $e^2=e$.  The set of idempotents is denoted $E(M)$.
The set $eMe=\{eme \,|\, m \in M \}=\{ m \in M \, |\, em=m=me\}$ is a monoid with identity $e$ and the multiplication induced from $M$. Let $G_e$ be the group of invertible elements of $eMe$. It is called the \emph{maximal subgroup} of $M$ at $e$.

If $m$ is an element of a finite monoid $M$, then there exist
$r, s >0$ such that $m^r=m^{r+s}.$
By choosing $r$ and $s$ minimal, the set $G=\{m^r, \dots , m^{r+s-1} \}$ is a cyclic group of order $s$ isomorphic to $\mathbb Z_s$ via the map sending $m^t$ to the residue class of $t\bmod s$~\cite[Theorem~1.9]{CP}. The identity of the group is $m^k$, where $k$ is the unique integer such that $r\leq k \leq r+s-1$ and $k \equiv 0\bmod s$, and is denoted symbolically by $m^{\omega}$. The group $G$ is generated by $m^{\omega}m$, which we denote $m^{\omega+1}$.  Note that if $M$ is a finite monoid, then $m^{|M|!}=m^{\omega}$ for all $m\in M$ because, retaining the above notation, $r,s\leq |M|$ and hence $|M|!\equiv 0\bmod s$.

\begin{definition}
Let $M$ be a finite monoid and let $p=0$ or be prime. Let $m \in M$.
\begin{enumerate}[a)]
\item We say that $m$ is a {\emph{group element}} if there exists $s >0$ such that $m=m^s$ or, equivalently, $m=m^{\omega+1}$.
\item For a group element $m$, the minimal $s>0$ such that $m=m^{s+1}$ is its \emph{order}, denoted $|m|$, in which case $\{m,\ldots, m^s\}$ is a cyclic group of order $|m|$.
\item We say $m$ is \emph{$p$-regular} if $m$ is a group element and $p=0$ or $p$ does not divide  $|m|$.
\end{enumerate}
\end{definition}

If $p>0$ is a prime and $g$ is an element of a finite group $G$ with identity $e$, then $g$ can be uniquely factored as $g=g_pg_{p'}$ where $g_p$ has order a $p$-power, $g_{p'}$ has order prime to $p$ (i.e., $g_{p'}$ is $p$-regular) and $g_p$, $g_{p'}$ commute.  Indeed, if $|g|$ is $p^kt$ with $\gcd(t,p)=1$, then $g_p=g^r$ and $g_{p'}=g^{s}$ where $r\equiv 1\bmod p^k$, $r\equiv 0\bmod t$ and $s\equiv 1\bmod t$, $s\equiv 0\bmod p^k$. See~\cite[Lemma~40.3]{curtis} and its proof for details.  If $p=0$, we define $g_p=e$ and $g_{p'}=g$.

If $M$ is a finite monoid, then for any $m \in M$, we observe that $m^{\omega+1}$ is always a group element and so $m^{\omega+1}_{p'}$ makes sense.

Let us fix some notation that will be used throughout this section.
Let $n$ be the least common multiple of the orders of the $p$-regular elements of $M$. Notice that $p \nmid n$ and that we have a homomorphism $\theta$ from the Galois group $\mathrm{Gal}(K(\xi_n)/K)$ into the multiplicative group $\Z_n^*$, where $\xi_n$ is a primitive $n^{th}$-root of unity in a fixed algebraic closure $\bar{K}$ of $K$, defined as follows. If $\sigma \in \mathrm{Gal}(K(\xi_n)/K)$, then $\sigma(\xi_n)=\xi_n^\ell,$ where $ \ell \in \Z_n^*$, and we put $\theta(\sigma)=\ell$. Let $T=\theta(\mathrm{Gal}(K(\xi_n)/K))\leq \Z_n^*$, also denoted $\mathop{\mathrm{Im}}\mathrm{Gal}(K(\xi_n)/K)$.

Let us now assume that $M$ is a group $G$.  Following Berman~\cite{Berman}, we say that $p$-regular elements $g,h\in G$ are \emph{$K$-conjugate}, denoted $g\sim_K h$, if there exist $x\in G$ and $j\in T$ such that $xgx^{-1}=h^j$.  It is easy to check that this is an equivalence relation on the set $p\text{-}\mathrm{reg}(G)$ of $p$-regular elements of $G$.

\begin{theorem}[Berman~\cite{Berman}]\label{Berman}
Let $G$ be a finite group and $K$ a field.  Then the number of equivalence classes of irreducible representations of $G$ over $K$ is the number of $K$-conjugacy classes of $p$-regular elements of $G$.
\end{theorem}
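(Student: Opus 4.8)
The plan is to reduce Berman's theorem to two independent counting statements and to match them up. On the representation side, recall the classical Witt--Berman philosophy: if $\bar K$ is the algebraic closure of $K$, then $\mathrm{Gal}(\bar K/K)$ acts on the set of absolutely irreducible characters of $G$ (by acting on values), and two absolutely irreducible representations give the same $K$-irreducible constituent precisely when they lie in the same Galois orbit. Thus $|\Irr_K(G)|$ equals the number of orbits of $\mathrm{Gal}(\bar K/K)$ on $\Irr_{\bar K}(G)$. Since all character values lie in $K(\xi_n)$ (as the order of every $p$-regular element divides $n$, and in characteristic $p$ the Brauer characters are what carry the information; more precisely, the absolutely irreducible $\bar K$-characters of $G$ take values in $\mathbb{Q}(\xi_n)$ after reduction), this action factors through $\mathrm{Gal}(K(\xi_n)/K)$, hence through $T \leq \mathbb{Z}_n^*$. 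So the first step is: $|\Irr_K(G)| = \#\{\text{orbits of } T \text{ acting on } \Irr_{\bar K}(G)\}$, where $j \in T$ sends $\chi$ to the character $\chi^{(j)}$ whose value at $g$ is the image of $\chi(g)$ under the automorphism $\xi_n \mapsto \xi_n^j$.

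The second step is the combinatorial heart. Over $\bar K$ (characteristic $p$), the number of absolutely irreducible characters equals the number of $p$-regular conjugacy classes of $G$ — this is the standard modular representation theory fact (Brauer), and in characteristic $0$ it is just the number of conjugacy classes, all of which are $p$-regular by convention. Now I want to identify the $T$-action on $\Irr_{\bar K}(G)$ with a $T$-action on $p$-regular classes, so that orbits match. The key classical input (again going back to Brauer) is that the ``Galois twist'' $\chi \mapsto \chi^{(j)}$ corresponds under the perfect pairing between irreducible Brauer characters and $p$-regular classes to the permutation of $p$-regular classes sending the class of $g$ to the class of $g^{j}$ (this is well-defined since $j$ is prime to $|g|$ when $g$ is $p$-regular and $j \in T \subseteq \mathbb{Z}_n^*$, and $n$ is divisible by $|g|$). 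Concretely: $\chi^{(j)}(g) = \chi(g^{j'})$ where $jj' \equiv 1$, by the Galois-equivariance of the reduction map and the fact that eigenvalues of $\varphi(g)$ are $|g|$-th roots of unity. Combining the two steps: $T$-orbits on $\Irr_{\bar K}(G)$ biject with $T$-orbits on $p$-regular classes, and the latter — unwinding the definition — are exactly the $K$-conjugacy classes, since $g \sim_K h$ means $xgx^{-1} = h^j$ for some $x \in G$, $j \in T$, i.e.\ the class of $g$ and the class of $h$ differ by the $j$-action for some $j \in T$.

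I would carry this out as: (1) establish character values lie in $K(\xi_n)$, so the relevant Galois group is $\mathrm{Gal}(K(\xi_n)/K)$ with image $T$; (2) recall that $|\Irr_K(G)|$ is the number of $\mathrm{Gal}(K(\xi_n)/K)$-orbits on $\Irr_{\bar K}(G)$ (Berman's or Witt's descent argument, or cite it directly from the modular representation theory literature such as \cite{curtis}); (3) recall that $|\Irr_{\bar K}(G)|$ equals the number of $p$-regular classes and that the Galois action on $\Irr_{\bar K}(G)$ transports, via the Brauer-character pairing, to the action $[g] \mapsto [g^j]$ on $p$-regular classes; (4) observe this action's orbits are the $K$-conjugacy classes by definition. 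The main obstacle is step (3): one must be careful that in characteristic $p$ the relevant objects are Brauer characters (traces computed via lifting eigenvalues to characteristic $0$), that the reduction-mod-$p$ map is Galois-equivariant, and that the ``number of absolutely irreducibles = number of $p$-regular classes'' statement and the Galois-permutation-of-classes statement are invoked in compatible form. In a paper of this kind it is cleanest to cite Berman's original argument or a standard reference for this, since the theorem is explicitly attributed to \cite{Berman}; the proof essentially consists in assembling these well-documented ingredients rather than proving them afresh.
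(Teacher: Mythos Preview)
The paper does not actually prove this theorem: immediately after the statement it simply cites references, noting that the characteristic~$0$ case is~\cite[Theorem~42.8]{curtis} and that a proof in characteristic~$p$ using Brauer characters appears in~\cite{ReinerPfBerman}. So there is no ``paper's own proof'' to compare against; Berman's theorem is used as a black box.

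Your sketch is essentially the standard argument (and indeed the one in Reiner's short note~\cite{ReinerPfBerman}): count $K$-irreducibles as Galois orbits on $\bar K$-irreducibles, count $\bar K$-irreducibles as $p$-regular classes via Brauer characters, and transport the Galois action across the pairing to the power map $[g]\mapsto [g^j]$ on $p$-regular classes. The outline is sound, and you correctly flag the one place needing care --- that in positive characteristic one works with Brauer characters and must check Galois-equivariance of the lift. Since the paper itself is content to cite the result, your proposal goes further than what is required here; if anything, you could shorten it to a one-line citation matching the paper's treatment.
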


This result can be found in characteristic $0$ as~\cite[Theorem~42.8]{curtis}; a proof in characteristic $p$ using Brauer characters can be found in~\cite{ReinerPfBerman}.

We shall need the following lemma clarifying $\sim_K$.

\begin{lemma}\label{galoisres}
If $t$ is such that the orders of all $p$-regular elements of a finite group $G$ divide $t$ and $T'=\mathop{\mathrm{Im}}\mathrm{Gal} (K(\xi_t)/K)$ in $\mathbb Z_t^*$, then $g\sim_K h$ if and only if there exist $x\in G$ and $j\in T'$ such that $xgx^{-1}=h^j$.\end{lemma}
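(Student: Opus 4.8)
The plan is to reduce the statement for $t$ to the statement for $n' := \mathrm{lcm}$ of the orders of $p$-regular elements of $G$ (the analogue of $n$, but for $G$ rather than $M$), and then observe that changing the modulus from $n'$ to an arbitrary multiple $t$ does not change the relation. First I would note that $n' \mid t$, since by hypothesis every $p$-regular element of $G$ has order dividing $t$, and $n'$ is the lcm of those orders. Let $T'' = \mathop{\mathrm{Im}}\mathrm{Gal}(K(\xi_{n'})/K) \leq \Z_{n'}^*$, so that $\sim_K$ (as defined in the text, after replacing $M$ by $G$, $n$ by $n'$) is exactly: $g\sim_K h$ iff there exist $x\in G$, $j\in T''$ with $xgx^{-1}=h^j$. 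The task is therefore to show that this relation coincides with the one defined using $t$ and $T'$.

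The key point is the compatibility of the reduction maps. Since $n'\mid t$, restriction of automorphisms gives a surjection $\rho\colon \mathrm{Gal}(K(\xi_t)/K)\to \mathrm{Gal}(K(\xi_{n'})/K)$ (surjective because $K(\xi_{n'})\subseteq K(\xi_t)$ and both are Galois over $K$, being splitting fields of separable polynomials $X^{n'}-1$, $X^t-1$; separability uses $p\nmid n'$ and, for $t$, we may harmlessly assume $p\nmid t$ as well — or argue with the $p'$-part, see below). Under the natural reduction $\Z_t^*\to \Z_{n'}^*$, the image $T'$ maps onto $T''$, i.e. the diagram
\[
\begin{array}{ccc}
\mathrm{Gal}(K(\xi_t)/K) & \xrightarrow{\ \theta_t\ } & T'\leq \Z_t^* \\[2pt]
\downarrow\rho & & \downarrow \\[2pt]
\mathrm{Gal}(K(\xi_{n'})/K) & \xrightarrow{\ \theta_{n'}\ } & T''\leq \Z_{n'}^*
\end{array}
\]
commutes and the vertical maps are surjective. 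Then I would argue both implications. If $xgx^{-1}=h^j$ with $j\in T'$, reduce $j$ mod $n'$; since $h$ is $p$-regular its order divides $n'$, so $h^j$ only depends on $j\bmod |h|$, hence on $j\bmod n'$, and the reduced class lies in $T''$ — giving $g\sim_K h$ in the $n'$-sense. Conversely, if $xgx^{-1}=h^{j'}$ with $j'\in T''$, surjectivity of $T'\to T''$ lets me lift $j'$ to some $j\in T'$ with $j\equiv j'\bmod n'$; again $h^j=h^{j'}$ because $|h|\mid n'$, so the same conjugacy relation holds with the lifted exponent in $T'$.

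**Main obstacle.** The one genuinely delicate point is the surjectivity of $\rho$ and hence of $T'\to T''$, which in turn rests on $X^{n'}-1$ and $X^t-1$ being separable over $K$ so that $K(\xi_{n'})$ and $K(\xi_t)$ are Galois extensions with the expected restriction map. The modulus $n'$ is automatically prime to $p$. If $t$ is divisible by $p$, then $\xi_t$ as literally "a primitive $t$-th root of unity" does not exist in characteristic $p$; the clean fix is to interpret everything via the $p'$-part: write $t = p^a t'$ with $p\nmid t'$, note $n'\mid t'$ since all $p$-regular orders are prime to $p$, observe $K(\xi_t)=K(\xi_{t'})$ and that the reduction $\Z_{t}^*\to\Z_{t'}^*$ identifies $T'$ with $\mathop{\mathrm{Im}}\mathrm{Gal}(K(\xi_{t'})/K)$ acting correctly on $p$-regular elements. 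After this normalization the argument above goes through verbatim. I would spell out the $p'$-part reduction carefully and leave the Galois-theoretic surjectivity of the restriction map as a standard fact.
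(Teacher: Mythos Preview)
Your proposal is correct and follows essentially the same approach as the paper's proof: both arguments note that $n'\mid t$, use the surjectivity of the restriction map $\mathrm{Gal}(K(\xi_t)/K)\to\mathrm{Gal}(K(\xi_{n'})/K)$ to conclude that reduction mod $n'$ carries $T'$ onto $T''$, and then observe that $h^j$ depends only on $j\bmod n'$ since $|h|\mid n'$. Your discussion of the case $p\mid t$ is an extra point of care that the paper does not spell out (in the paper's only application the modulus $t$ is already prime to $p$), but your proposed fix via the $p'$-part of $t$ is sound.
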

\begin{proof}
Let $n$ be the least common multiple of the orders of the $p$-regular elements of $G$ and observe that it is a divisor of $t$. Then since $K(\xi_t)/K$ is abelian, it follows that the subfield $K(\xi_n)$ is $\mathrm{Gal}(K(\xi_t)/K)$-invariant and that the restriction map $\mathrm{Gal}(K(\xi_t)/K)\to \mathrm{Gal}(K(\xi_n)/K)$ is a surjective homomorphism. Also if $\sigma\in \mathrm{Gal}(K(\xi_t)/K)$ with $\sigma(\xi_t)=\xi_t^j$, then $\sigma(\xi_n)=\xi_n^j$.   Therefore, if $\alpha\colon \mathbb{Z}_t^*\to\mathbb{Z}_n^*$ is the surjective homomorphism given by $\alpha(j)=j\bmod{n}$, then $T=\alpha(T')$.  Moreover, if $j\in T'$, then
$h^j=h^{\alpha(j)}$ because $|h|$ divides $n$, which divides $t$. The lemma follows.
\end{proof}

Recall that two idempotents $e$ and $f$ in a monoid $M$ are \emph{$\mathcal D$-equivalent}, written $e\mathrel{\mathcal D} f$, if there exist $x,y\in M$ with $xyx=x$, $yxy=y$, $xy=e$ and $yx=f$; see~\cite[Section~2.3]{CP} or~\cite[Proposition~1.3]{TilsonXI} for details.  An equivalence class for the $\mathcal D$-relation is called a \emph{$\mathcal D$-class}. It is well known that in a finite monoid $M$, one has that  $e \mathrel{\mathcal{D}}f$ if and only if $MeM=MfM$~\cite[Appendix A]{qtheor}. (In general, elements $m,n\in M$ are called \emph{$\mathcal J$-equivalent} if $MmM=MnM$.)

The following well-known lemma, cf.~\cite[Proposition~1.4]{TilsonXI} or~\cite[Theorem~2.20]{CP}, will play a key role later.

\begin{lemma}\label{conjugationlemma}
Let $M$ be  a  monoid and let $e,f\in M$ be $\mathcal{D}$-equivalent idempotents.  Suppose that $x,y\in M$ with $xyx=x$, $yxy=y$, $xy=e$ and $yx=f$.  Then $\varphi\colon eMe\to fMf$ and $\psi\colon fMf\to eMe$ defined by $\varphi(a)=yax$ and $\psi(b)=xay$ are inverse isomorphisms of monoids.  Consequently, $\varphi$ and $\psi$ restrict to inverse isomorphisms of $G_e$ and $G_f$.
\end{lemma}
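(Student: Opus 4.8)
The plan is a direct verification that uses nothing but the four defining relations $xyx=x$, $yxy=y$, $xy=e$, $yx=f$, together with the description $eMe=\{m\in M : em=m=me\}$ (and the analogous one for $fMf$). So the proof is really a sequence of one-line reductions, each invoking exactly one relation.

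First I would check that $\varphi$ actually lands in $fMf$ and, symmetrically, that $\psi$ lands in $eMe$. Given $a\in eMe$, one computes $f\cdot(yax)=(yx)(yax)=(yxy)ax=yax$ using $yxy=y$, and $(yax)\cdot f=(yax)(yx)=ya(xyx)=yax$ using $xyx=x$; hence $yax\in fMf$. (Here one does \emph{not} use that $a\in eMe$, only the relations on $x,y$.) The statement for $\psi(b)=xby$ is the mirror image, obtained by interchanging $x\leftrightarrow y$ and $e\leftrightarrow f$.

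Next I would verify that $\varphi$ and $\psi$ are homomorphisms of monoids. For the units, $\varphi(e)=yex=y(xy)x=(yxy)x=yx=f$. For multiplicativity, given $a,a'\in eMe$ we have $\varphi(a)\varphi(a')=(yax)(ya'x)=ya(xy)a'x=ya\,e\,a'x=y(ae)a'x=y(aa')x=\varphi(aa')$, using $xy=e$ and then $ae=a$; and $aa'\in eMe$ since $e(aa')=(ea)a'=aa'$ and likewise on the right. Again $\psi$ is handled symmetrically. Finally, to see the two maps are mutually inverse: for $a\in eMe$, $\psi(\varphi(a))=x(yax)y=(xy)a(xy)=e\,a\,e=a$, and for $b\in fMf$, $\varphi(\psi(b))=y(xby)x=(yx)b(yx)=f\,b\,f=b$. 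Thus $\varphi$ and $\psi$ are inverse isomorphisms of monoids. The last sentence is then automatic: an isomorphism of monoids carries the group of units onto the group of units, so $\varphi$ and $\psi$ restrict to inverse isomorphisms between $G_e$ and $G_f$.

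There is no genuine conceptual obstacle here; the only thing requiring care is the bookkeeping — making sure each simplification step applies one of the four identities to an adjacent pair of letters, and resisting the temptation to treat $e=xy$ or $f=yx$ as central (they commute with $x$, $y$ in the specific ways dictated by $xyx=x$ and $yxy=y$, but not with arbitrary elements of $M$). Writing the computations with the bracketing shown above makes every step unambiguous.
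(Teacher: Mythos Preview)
Your verification is correct; each step uses exactly one of the relations $xyx=x$, $yxy=y$, $xy=e$, $yx=f$ (or the membership $a\in eMe$), and the final remark that a monoid isomorphism carries units to units handles the last sentence. Note, though, that the paper does not actually supply a proof of this lemma: it is stated as a well-known fact with references to \cite[Proposition~1.4]{TilsonXI} and \cite[Theorem~2.20]{CP}. So there is nothing to compare against beyond observing that your direct computation is the standard argument one finds in those sources.
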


Let $e_1, \dots, e_k$ be elements in $E(M)$ representing the distinct
$\mathcal{D}$-classes of $E(M)$. We can define a partial order on the set $\{e_1,\ldots, e_k\}$ by  $e_i \preceq e_j$ if and only if $Me_iM \subseteq Me_jM$.

The following is a fundamental result, proved independently by Munn~\cite{Munn1} and Ponizovsky~\cite{Poni} based on earlier work of Clifford~\cite{Clifford2}; see~\cite[Theorem~5.33]{CP} or~\cite{RhodesZalc}.  A simpler, module-theoretic approach can be found in~\cite{gmsrep}.

\begin{theorem}[Clifford-Munn-Ponizovsky]\label{indcar}
Let $M$ be a finite monoid, $K$ a field and $e_1,\ldots, e_k$ idempotents representing the $\mathcal D$-classes of $E(M)$.
If $\varphi\colon M \to M_r(K)$ is an irreducible representation, then there exists a unique minimal idempotent $e_i$ (with respect to $\preceq$) among $e_1,\ldots, e_k$ such that $\varphi(e_i)\neq 0$.  Moreover,
\begin{enumerate}[i)]
\item for $m \in M$, $\varphi(m) \neq 0$ if and only if $e_i \in MmM$;
\item $\varphi_{|_{G_{e_i}}} \sim
\begin{bmatrix}
\hat{\varphi} & 0\\
0 & 0\\
\end{bmatrix}$,
where $\hat{\varphi}$ is an irreducible representation of the group $G_{e_i}$.
\end{enumerate}
The element $e_i$ is called the \emph{apex} of $\varphi$.

In addition, the map \[\psi\colon \Irr_K(M) \to \displaystyle{\bigsqcup_{i=1}^k  \Irr_K(G_{e_i})}\] given by $\psi(\varphi)=\hat{\varphi}$ is a bijection.
\end{theorem}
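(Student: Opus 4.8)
The plan is to translate the statement into the language of modules over the monoid algebra $A=KM$, so that a representation $\varphi\colon M\to M_r(K)$ becomes the left $A$-module $V=K^r$, irreducible representations become simple modules, and $\varphi(m)$ acts as $m$. The two engines of the proof are: (a) Green's theory of an idempotent $e\in E(M)$, which says that $eAe$ is canonically the monoid algebra $K[eMe]$ with identity $e$, that the functor $V\mapsto eV$ sends simple $A$-modules $V$ with $eV\neq 0$ to simple $eAe$-modules, and that $U\mapsto L(U)$ — with $L(U)$ the top (unique simple quotient) of $Ae\otimes_{eAe}U$ — is a two-sided inverse, satisfying $e\,L(U)\cong U$; and (b) the elementary observation that, since $K[MmM]=AmA$, for $m\in M$ one has $\varphi(m)\neq 0$ iff $K[MmM]\not\subseteq\operatorname{Ann}(V)$ (one direction uses $m\in MmM$; for the other, $K[MmM]V$ is then a nonzero submodule of the simple module $V$, hence all of $V$, so $AmV=AmAV=V$ and $mV\neq 0$).

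First I would isolate the apex. Let $I_0$ be the union of all ideals $I$ of the monoid $M$ with $K[I]\subseteq\operatorname{Ann}(V)$; it is the largest such ideal and $1\notin I_0$, so by (b) we have $\varphi(m)\neq 0$ iff $MmM\not\subseteq I_0$, and $M\smallsetminus I_0$ is a nonempty, $\mathcal J$-upward-closed union of $\mathcal J$-classes. Since $\operatorname{Ann}(V)$ is the annihilator of a simple module over a finite-dimensional algebra, it is a maximal — in particular prime — two-sided ideal. If $J_1\neq J_2$ were distinct $\preceq$-minimal $\mathcal J$-classes of $M\smallsetminus I_0$, then $K[MJ_1M]\cdot K[MJ_2M]\subseteq K[MJ_1M\cap MJ_2M]$, while $\preceq$-minimality forces $MJ_1M\cap MJ_2M\subseteq I_0$; hence this product lies in $\operatorname{Ann}(V)$, and primeness gives $J_i\subseteq I_0$ for some $i$, a contradiction. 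Thus $M\smallsetminus I_0$ has a unique minimal $\mathcal J$-class $J$, and a null-semigroup argument shows $J$ is regular: were $J$ a null class, then $K[MJM]^2\subseteq K[MJM\smallsetminus J]\subseteq\operatorname{Ann}(V)$ while $K[MJM]V=V$, forcing $V=0$. Taking $e_i$ to be the idempotent among $e_1,\dots,e_k$ that lies in $J$, we get $\varphi(e_j)\neq 0\iff J\subseteq Me_jM\iff e_i\preceq e_j$, so $e_i$ is the unique minimal (indeed smallest) such idempotent, the apex; and part (i) is the chain $\varphi(m)\neq 0\iff J\subseteq MmM\iff e_i\in MmM$.

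Next I would prove (ii). Write $e=e_i$. As $\varphi(e)$ is an idempotent matrix, $V=eV\oplus\ker\varphi(e)$, and since $\varphi(a)=\varphi(e)\varphi(a)\varphi(e)$ for all $a\in eMe$, in an adapted basis $\varphi(a)=\bigl[\begin{smallmatrix}\widehat\varphi(a)&0\\0&0\end{smallmatrix}\bigr]$, where $\widehat\varphi$ is the action of $eMe$ on $eV$; by engine (a), $eV$ is a simple $eAe=K[eMe]$-module. Now $eMe\smallsetminus G_e$ is an ideal of the monoid $eMe$, and every $x$ in it acts as $0$ on $V$: by (i) it suffices that $e\notin MxM$, and $e\in MxM$ would give (using $x=exe$) that $e\in(eMe)x(eMe)$, so $x$ would be $\mathcal J$-equivalent in $eMe$ to its identity $e$ — hence a unit — contradicting $x\notin G_e$. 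Therefore $K[eMe\smallsetminus G_e]$ annihilates $eV$, so $eV$ is a module over $K[eMe]/K[eMe\smallsetminus G_e]\cong KG_e$, and it is simple there because its $KG_e$-submodules are exactly its $K[eMe]$-submodules. Thus $\widehat\varphi$ restricts to an irreducible representation of $G_{e_i}$, which is (ii).

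Finally, the bijection. Uniqueness of the apex gives $\Irr_K(M)=\bigsqcup_i\{V:\operatorname{apex}(V)=e_i\}$; fix $i$ and write $e=e_i$, $G=G_{e_i}$. One direction is exactly (ii): $V\mapsto eV$ carries $\{V:\operatorname{apex}(V)=e\}$ into simple $KG$-modules. For the reverse, given a simple $KG$-module $U$, inflate it to a simple $K[eMe]$-module $\widetilde U$ along $K[eMe]\twoheadrightarrow KG$ and form $L(\widetilde U)$; the point to verify is $\operatorname{apex}(L(\widetilde U))=e$, which follows from (i) together with the facts that $L(\widetilde U)$ is generated as an $A$-module by $eL(\widetilde U)$ and that $e\,x\,m\,e$ is a non-unit of $eMe$ whenever $x\in Me_iM\smallsetminus J$. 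By engine (a) these two assignments are mutually inverse, and the disjoint union over $i$ yields the bijection $\psi$ with $\psi(\varphi)=\widehat\varphi$. I expect the main obstacle to be the apex material of the second step — showing that $M\smallsetminus I_0$ has a \emph{unique} minimal $\mathcal J$-class and that it is \emph{regular} — which itself presupposes Green's idempotent theory; the other delicate point is the verification that $L(\widetilde U)$ has apex $e$ in the last step.
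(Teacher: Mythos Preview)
The paper does not actually prove this theorem: it is quoted as a classical result, with citations to Munn, Ponizovski\u{\i}, Clifford--Preston and Rhodes--Zalcstein, and a pointer to the ``simpler, module-theoretic approach'' of Ganyushkin--Mazorchuk--Steinberg. So there is nothing in the paper to compare your argument against line by line.

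That said, your proposal is essentially a correct execution of the module-theoretic proof the paper alludes to. The identification $AmA=K[MmM]$ and the use of primeness of $\operatorname{Ann}(V)$ to force a unique minimal $\mathcal J$-class $J$ outside $I_0$ is clean; the regularity of $J$ via the null-square argument is standard and correct (the needed fact $(MJM)^2\subseteq MJM\setminus J$ for non-regular $J$ follows from stability of finite monoids). Your proof of (ii) is fine once one notes that from $e=uxv$ with $x=exe$ one gets $e=(eue)x(eve)\in (eMe)x(eMe)$, forcing $x\in G_e$. For the bijection, your sketch of ``apex of $L(\widetilde U)$ is $e$'' can be made precise exactly along the lines you indicate: with $I=K[MeM\setminus J]$ one has $eIe\subseteq K[eMe\setminus G_e]$, hence $eIe\cdot eL(\widetilde U)=0$, so $IL(\widetilde U)\neq L(\widetilde U)$ and therefore $IL(\widetilde U)=0$; together with $eL(\widetilde U)\cong\widetilde U\neq 0$ this pins the apex at $e$. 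The two points you flagged as delicate are indeed the only places where a reader might want an extra sentence, but the ideas you supply are the right ones.
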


Let us interpret part of the above theorem in terms of characters.

\begin{cor}\label{rescar}
Let $\varphi\colon M\to M_r(K)$ be an irreducible representation with apex $e_i$.   If $\chi$ is the character of $\varphi$, then $\chi_{|_{G_{e_i}}}=\chi_{\hat{\varphi}}$ where $\hat{\varphi}$ is as in Theorem~\ref{indcar}.
\end{cor}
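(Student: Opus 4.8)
The plan is to read off the statement directly from part (ii) of Theorem~\ref{indcar}, since the only content is passing from representations to their traces. First I would restrict attention to the group $G_{e_i}$: by Theorem~\ref{indcar}(ii) there is an invertible matrix $A\in M_r(K)$ such that, for every $g\in G_{e_i}$,
\[
A\,\varphi(g)\,A^{-1}=\begin{bmatrix}\hat{\varphi}(g) & 0\\ 0 & 0\end{bmatrix},
\]
where $\hat{\varphi}$ is the irreducible representation of $G_{e_i}$ attached to $\varphi$ by the Clifford--Munn--Ponizovsky theorem. Note that this is an equivalence of representations of the group $G_{e_i}$, so the same matrix $A$ works simultaneously for all $g$.

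Next I would take traces. Since the trace is invariant under conjugation by an invertible matrix, for each $g\in G_{e_i}$ we get
\[
\chi(g)=\mathrm{Tr}(\varphi(g))=\mathrm{Tr}\!\left(A\,\varphi(g)\,A^{-1}\right)=\mathrm{Tr}\!\begin{bmatrix}\hat{\varphi}(g) & 0\\ 0 & 0\end{bmatrix}=\mathrm{Tr}(\hat{\varphi}(g))=\chi_{\hat{\varphi}}(g).
\]
Since $g\in G_{e_i}$ was arbitrary, this says precisely that $\chi_{|_{G_{e_i}}}=\chi_{\hat{\varphi}}$, which is the assertion of the corollary.

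There is no real obstacle here: the corollary is a formal consequence of Theorem~\ref{indcar} together with the two elementary facts that equivalent representations have equal characters and that block-diagonal matrices with a zero block contribute nothing extra to the trace. The only point worth flagging is that one must invoke the part of Theorem~\ref{indcar}(ii) giving equivalence \emph{as representations of $G_{e_i}$} (a single conjugating matrix $A$ for all group elements), which is exactly what is stated there; everything else is the trace computation displayed above.
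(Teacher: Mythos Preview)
Your proof is correct and is exactly the paper's approach: the paper's proof is the single sentence ``This is immediate from ii) of Theorem~\ref{indcar},'' and you have simply unpacked that immediacy by writing out the conjugation-invariance of trace and the block-diagonal form. There is nothing to add.
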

\begin{proof}
This is immediate from ii) of Theorem~\ref{indcar}.
\end{proof}

We can now prove the analogue of Theorem~\ref{indepgroupchar} for monoids.

\begin{theorem}\label{indepofchar}
If $\chi_1, \ldots, \chi_s$ are characters of inequivalent irreducible representations of a finite monoid $M$ over a field $K$, then they are non-zero, distinct and linearly independent.
\end{theorem}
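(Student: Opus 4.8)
The plan is to reduce the monoid statement to the group case, Theorem~\ref{indepgroupchar}, using the Clifford--Munn--Ponizovsky machinery that has already been set up. First I would group the irreducible representations $\varphi_1,\ldots,\varphi_s$ according to their apexes. So partition $\{1,\ldots,s\}$ into blocks $B_1,\ldots,B_k$ where $j\in B_i$ exactly when $\varphi_j$ has apex $e_i$ (discarding empty blocks). By the bijection $\psi$ in Theorem~\ref{indcar}, within each block $B_i$ the representations $\hat\varphi_j$ (for $j\in B_i$) are pairwise inequivalent irreducible representations of the group $G_{e_i}$, so by Theorem~\ref{indepgroupchar} their characters $\chi_{\hat\varphi_j}$ are linearly independent (in particular nonzero) on $G_{e_i}$; and by Corollary~\ref{rescar}, $\chi_{\hat\varphi_j}$ is precisely the restriction $\chi_j|_{G_{e_i}}$.

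Next I would establish non-vanishing and distinctness of the $\chi_j$ as functions on $M$. Each $\chi_j$ is nonzero because its restriction to $G_{e_i}$ (where $e_i$ is its apex) is $\chi_{\hat\varphi_j}$, which is nonzero by the group case. For distinctness, suppose $\chi_j=\chi_{j'}$ with $j$ having apex $e_i$ and $j'$ having apex $e_{i'}$. Since $\chi_j(e_i)=\chi_{\hat\varphi_j}(e_i)\neq 0$ (the identity of $G_{e_i}$ has nonzero character value in the group), we get $\chi_{j'}(e_i)\neq 0$, so by part~i) of Theorem~\ref{indcar} applied to $\varphi_{j'}$ we must have $e_{i'}\in Me_iM$, i.e.\ $e_{i'}\preceq e_i$; by symmetry $e_i\preceq e_{i'}$, hence $e_i=e_{i'}$: the apexes coincide. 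Then restricting the equality $\chi_j=\chi_{j'}$ to $G_{e_i}$ gives $\chi_{\hat\varphi_j}=\chi_{\hat\varphi_{j'}}$, forcing $j=j'$ by the group case.

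For linear independence, suppose $\sum_{j=1}^s c_j\chi_j=0$ with $c_j\in K$. I would argue apex-class by apex-class, working from minimal apexes upward in the order $\preceq$. Fix a block $B_i$ and restrict the relation to $G_{e_i}$: by part~i) of Theorem~\ref{indcar}, if $j$ has apex $e_{i'}$ with $e_{i'}\not\preceq e_i$ then $\varphi_j$ vanishes on all of $G_{e_i}$ (since $e_i\notin Me_{i'}M$), so $\chi_j|_{G_{e_i}}=0$; thus only blocks $B_{i'}$ with $e_{i'}\preceq e_i$ survive. Choosing $e_i$ minimal among the apexes actually occurring and restricting to $G_{e_i}$ leaves exactly $\sum_{j\in B_i}c_j\chi_{\hat\varphi_j}=0$ on $G_{e_i}$; by Theorem~\ref{indepgroupchar} all $c_j=0$ for $j\in B_i$. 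Then remove that block and induct on the (finite) poset of occurring apexes.

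The main obstacle is getting the poset induction clean: one must be careful that ``$\varphi_j$ vanishes on $G_{e_i}$ unless the apex of $\varphi_j$ lies $\preceq e_i$'' is exactly what part~i) of Theorem~\ref{indcar} gives (since $g\in G_{e_i}$ implies $Me_iM = MgM$, so $\varphi_j(g)\neq 0 \iff e_{\mathrm{apex}(j)}\in Me_iM$), and to order the blocks so that at each stage the surviving restricted relation is genuinely a relation among the characters of a single group. Everything else is a routine application of Theorem~\ref{indcar}, Corollary~\ref{rescar}, and Theorem~\ref{indepgroupchar}.
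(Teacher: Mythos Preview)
Your proof is correct and follows essentially the same strategy as the paper's: pick an apex that is minimal (with respect to $\preceq$) among those occurring, restrict the linear relation to the corresponding maximal subgroup, use part~i) of Theorem~\ref{indcar} to kill all terms with a different apex, and invoke Theorem~\ref{indepgroupchar} for the surviving block. The only cosmetic differences are that the paper phrases this as a single contradiction (assuming all coefficients nonzero) rather than an explicit poset induction, and it does not argue ``nonzero'' and ``distinct'' separately since both follow at once from linear independence.
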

\begin{proof}
Without loss of generality, we may suppose that there are $c_1, \dots, c_s$ in $K\setminus \{0\}$ such that
\[0=c_1\chi_1 + \cdots + c_s\chi_s.\]

Let $e=e_i$ be  minimal with respect to $\preceq$ among the apexes of the representations $\chi_1, \dots , \chi_s$. Reordering the representations we can assume that $e$ is the apex for $\chi_1, \dots , \chi_t$ where $1\leq t\leq s$, and that $\chi_{t+1}, \dots , \chi_s$ have a different apex than $e$.

Any $g$ in the group $G_e$ satisfies $MeM=MgM$. So, if $e_\ell$ is the apex of $\chi_j$ with  $j > t$ (and hence $\ell\neq i$), then $Me_\ell M \nsubseteq MeM=MgM$ by minimality of $e$. Therefore, $\varphi_j(g)=0$ by i) of Theorem~\ref{indcar}. We conclude that $\chi_{j}(g)=0$, for all $g \in G_e$ and $t<j\leq s$.

Therefore, for any $g \in G_e$, we obtain
\[0=c_1\chi_1(g)+ \cdots + c_t\chi_t(g).\]

But ${\chi_1}_{|_{G_e}}, \dots, {\chi_t}_{|_{G_e}}$ are characters of distinct irreducible representations of $G_e$ by Theorem~\ref{indcar} and Corollary~\ref{rescar}, a contradiction with the linear independence of irreducible characters of finite groups, cf.~Theorem~\ref{indepgroupchar}.
\end{proof}

\begin{remark}
In~\cite[Theorem~2.1]{guralnick} it is shown that, for arbitrary monoids (possibly infinite), the only obstruction to characters of distinct irreducible representations being linearly independent is that one or more of the characters might be identically zero and that this can indeed happen for infinite groups.
\end{remark}

The following lemma is crucial for understanding character equivalence. We recall that if $n$ is the least common multiple of the orders of the $p$-regular elements of $M$, then $T$ denotes the image of $\mathrm{Gal}(K(\xi_n)/K)$ in $\mathbb Z_n^*$.

\begin{lemma}\label{characters}
If $\chi$ is a character of $M$ over a field $K$ of characteristic $p \geq 0$ and $a,b\in M$, then
\begin{enumerate}[a)]
\item $\chi(ab) = \chi(ba)$;
\item $\chi(a) = \chi(a_{p'}^{\omega+1})$;
\item$\chi(a)=\chi(a^j)$ if $a$ is $p$-regular and $j\in T$.
\end{enumerate}
\end{lemma}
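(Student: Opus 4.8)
The plan is to fix a representation $\varphi\colon M\to M_r(K)$ with $\chi=\chi_\varphi$ and to analyse the single matrix $\varphi(a)$, decomposing $K^r$ along suitable idempotents. Part~(a) is then immediate from cyclic invariance of trace: $\chi(ab)=\mathrm{Tr}(\varphi(a)\varphi(b))=\mathrm{Tr}(\varphi(b)\varphi(a))=\chi(ba)$.

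For (b) I would set $e=a^{\omega}$, so that $ae=ea=a^{\omega+1}$ and $\varphi(e)$ is an idempotent matrix, and write $K^r=V\oplus W$ with $V=\mathrm{Im}\,\varphi(e)$ and $W=\ker\varphi(e)$. Since $\varphi(a)$ commutes with $\varphi(e)$, both summands are $\varphi(a)$-invariant and $\varphi(e)$ acts as the identity on $V$ and as $0$ on $W$. The one substantive point is that $\varphi(a)|_W$ is nilpotent: taking $a^r$ to be the least power of $a$ lying in the cyclic group $G=\{a^r,\dots,a^{r+s-1}\}$, one has $ea^r=a^r$ (as $e$ is the identity of $G$), so $\varphi(a)^r(W)=\varphi(a^r)(W)\subseteq\mathrm{Im}\,\varphi(e)\cap W=0$. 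Hence $\chi(a)=\mathrm{Tr}(\varphi(a)|_V)$, and since $\varphi(a^{\omega+1})=\varphi(e)\varphi(a)$ agrees with $\varphi(a)$ on $V$ and vanishes on $W$, we obtain $\chi(a)=\chi(a^{\omega+1})$. This reduces (b) to showing $\chi(g)=\chi(g_{p'})$ for a group element $g$, which is vacuous when $p=0$. For $p>0$, with $f=g^{\omega}$ I would again split $K^r=V'\oplus W'$ along $\varphi(f)$; now $gf=g=fg$ and $g_{p'}f=g_{p'}=fg_{p'}$ force $\varphi(g)$ and $\varphi(g_{p'})$ to annihilate $W'$, so it suffices to compare their traces on $V'$, where $\varphi$ restricts to a unital representation of the finite cyclic group $\langle g\rangle$. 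On $V'$ the matrix $\varphi(g_p)$ is unipotent (its $|g_p|$-th power is the identity and $|g_p|$ is a power of $p$) and commutes with the diagonalizable $\varphi(g_{p'})$; diagonalizing $\varphi(g_{p'})$ over $\bar K$ and triangularizing $\varphi(g_p)$ on each of its eigenspaces shows that $\varphi(g)=\varphi(g_p)\varphi(g_{p'})$ has the same diagonal entries as $\varphi(g_{p'})$, so the two traces agree. I expect this group-element step to be the main obstacle to a clean write-up; everything else is bookkeeping with idempotents and finite cyclic semigroups (alternatively it can be quoted from the Brauer-character literature).

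For (c), since $a$ is $p$-regular we have $a^{|a|+1}=a$, so $\varphi(a)$ is annihilated by the separable polynomial $x(x^{|a|}-1)$ and is therefore diagonalizable over $\bar K$ with eigenvalues $0$ and roots of unity of order dividing $|a|$; as $|a|$ divides $n$, each nonzero eigenvalue is a power of $\xi_n$, whence $\chi(a)=\sum_i\xi_n^{m_i}$, a sum lying in $K(\xi_n)$ and in fact in $K$ because $\varphi(a)\in M_r(K)$. Choosing $\sigma\in\mathrm{Gal}(K(\xi_n)/K)$ with $\theta(\sigma)=j$, so that $\sigma(\xi_n)=\xi_n^{j}$, the eigenvalues of $\varphi(a^j)=\varphi(a)^j$ are the $j$-th powers of those of $\varphi(a)$, and therefore $\chi(a^j)=\sum_i\xi_n^{jm_i}=\sigma\bigl(\sum_i\xi_n^{m_i}\bigr)=\sigma(\chi(a))=\chi(a)$, the last equality because $\sigma$ fixes $K$.
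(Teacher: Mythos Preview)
Your argument is correct. Parts (a) and (c) are essentially the paper's proof: (a) is verbatim, and (c) is the same Galois-action idea, only you apply $\sigma$ directly to the trace while the paper applies it to the triangularized matrix over $K(\xi_n)$ and then reads off the diagonal.

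Part (b) is where you diverge. The paper does everything in one stroke: from $a^{r}=a^{r+s}$ with $s=p^{k}t$ it observes that the minimal polynomial of $\varphi(a)$ divides $x^{r}(x^{t}-1)^{p^{k}}$, triangularizes over $\bar K$, and chooses an integer $z\ge r$ with $z\equiv 1\pmod t$ and $z\equiv 0\pmod{p^{k}}$ so that $a^{z}=a^{\omega+1}_{p'}$ and $\lambda_i^{z}=\lambda_i$ for every (zero or $t^{\text{th}}$-root-of-unity) eigenvalue $\lambda_i$; the trace identity is then immediate. Your route is more structural: you split off $\ker\varphi(a^{\omega})$ to kill the nilpotent part and reduce to a genuine group element $g$, and then use the commuting factorization $g=g_pg_{p'}$ with $\varphi(g_p)$ unipotent and $\varphi(g_{p'})$ separably diagonalizable to match traces. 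Both are short; the paper's version avoids the second idempotent decomposition and the simultaneous triangularization at the cost of locating the exponent $z$, while yours makes the ``nilpotent direction versus group direction'' dichotomy explicit and recycles the same idempotent trick twice. Note that in your write-up the second splitting is redundant in context (for $g=a^{\omega+1}$ one has $g^{\omega}=a^{\omega}$, so $V'=V$), but stating it for an arbitrary group element is harmless and arguably cleaner.
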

\begin{proof}
Let $\varphi\colon M\to M_\ell(K)$ be a representation.
Since $\chi(m) = \mathrm{Tr}(\varphi(m))$, it follows that
\[\chi(ab) = \mathrm{Tr}(\varphi(ab)) = \mathrm{Tr}(\varphi(a)\varphi(b)) = \mathrm{Tr}(\varphi(b)\varphi(a)) = \mathrm{Tr}(\varphi(ba)) =  \chi(ba).\]
To show (b), let $r,s>0$ be the minimal integers such that $a^r=a^{r+s}$.  Write $s=p^kt$ where $p^k$ and $t$ are coprime. If $p=0$, we take $k=0$ and interpret $p^k=1$ and $t=s$ (to avoid having to write out two separate cases).
Then
\begin{equation*}
\varphi(a)^{r} = \varphi(a)^{r+s}
\Longrightarrow \varphi(a)^{r}(\varphi(a)^{s}-1)=0
\Longrightarrow \varphi(a)^{r}(\varphi(a)^{t}-1)^{p^k}=0.
   \end{equation*}
 Let $p(x) = x^r(x^t-1)^{p^k}$.  Then $p(\varphi(a))=0$ and so the minimal polynomial of $\varphi(a)$ divides $p(x)$. Let $\bar{K}$ be an algebraic closure of $K$ containing $K(\xi_n)$. Then in $M_\ell(\bar{K})$ we have
 \[\varphi(a)\sim \begin{bmatrix}
 \lambda_1 & \ast&\cdots &  \ast  \\
                  0 & \lambda_2 &\ddots&\vdots \\
                   \vdots& \ddots &  \ddots &\ast\\
                   0&\cdots& 0&\lambda_{\ell}\\
 \end{bmatrix},\]
 where the non-zero elements among $\lambda_1, \ldots,\lambda_{\ell}\in \bar{K}$ are roots of $x^t-1$. Let $z\ge r$ such that $z\equiv 1\bmod t$ and $z\equiv 0 \bmod {p^k}$.
 Then $a^z=a_{p'}^{\omega+1}$. Therefore,
 \[\varphi(a_{p'}^{\omega+1}) = \varphi(a^z) = \varphi(a)^z \sim
  \begin{bmatrix}
 \lambda_1^z & \ast&\cdots &  \ast  \\
                  0 & \lambda_2^z &\ddots&\vdots \\
                   \vdots& \ddots &  \ddots &\ast\\
                   0&\cdots& 0&\lambda_{\ell}^z\\
 \end{bmatrix}.\]
Observe that, if $\lambda_i\neq 0$, then $\lambda_i^t=1$ and so $\lambda_i^z =\lambda_i$ because $z\equiv 1\bmod t$. Of course, $\lambda_i^z=\lambda_i$ is also true if $\lambda_i=0$.  Therefore,
 \[\chi(a) = \mathrm{Tr}(\varphi(a)) = \sum_{i=1}^{\ell} \lambda_i= \sum_{i=1}^{\ell} \lambda_i^z =
 \mathrm{Tr}(\varphi(a^z)) =  \chi(a_{p'}^{\omega+1}), \]
 which establishes (b).

 Now let $n$ be the $\mathrm{lcm}$ of the orders of the $p$-regular elements of $M$,  $T=\mathop{\mathrm{Im}} \mathrm{Gal}(K(\xi_n)/K)\le \mathbb Z_n^*$, and $L=K(\xi_n)$.
 If $a$ is $p$-regular, then $a=a^{t+1}$ with $t=|a|$ coprime to $p$. Thus  $p(\varphi(a))=0$, for $p(x) = x(x^t-1)$, and $t$ divides $n$. Consequently all eigenvalues of
 $\varphi(a)$ (over $\bar{K}$) belong to $L$. Then over $M_{\ell}(L)$ we have
 \[\varphi(a)\sim \begin{bmatrix}
 \lambda_1 & \ast&\cdots &  \ast  \\
                  0 & \lambda_2 &\ddots&\vdots \\
                   \vdots& \ddots &  \ddots &\ast\\
                   0&\cdots& 0&\lambda_{\ell}\\
 \end{bmatrix} \quad\text{and}\quad \varphi(a^j)\sim \begin{bmatrix}
 \lambda_1^j & \ast&\cdots &  \ast  \\
                  0 & \lambda_2^j &\ddots&\vdots \\
                   \vdots& \ddots &  \ddots &\ast\\
                   0&\cdots& 0&\lambda_{\ell}^j\\
 \end{bmatrix}.\]

We define $\alpha\colon \mathrm{Gal}(L/K)) \to \mathrm{Aut}(M_{\ell}(L))$ by
 \[\alpha(g)\left(\begin{bmatrix}
 a_{11} & \ldots & a_{1\ell}\\
 \vdots & & \vdots \\
  a_{\ell1} & \ldots & a_{\ell\ell}\\
  \end{bmatrix}\right) =
  \begin{bmatrix}
 g(a_{11}) & \ldots & g(a_{1\ell})\\
 \vdots & & \vdots \\
  g(a_{\ell1}) & \ldots & g(a_{\ell\ell})\\
  \end{bmatrix}.
 \]
We note that $M_{\ell}(K)$ is the set of fixed points of  $\mathrm{Gal}(L/K)$ acting on $M_{\ell}(L)$ and hence $\alpha(g)(\varphi(a))=\varphi(a)$ for all $g\in \mathrm{Gal}(L/K)$.  Let $j\in T$ and let $g\in \mathrm{Gal}(L/K)$ be such that $g(\xi_n)=\xi^j$.  Note that if $\lambda$ is either $0$ or an $n^{th}$-root of unity, then $g(\lambda)=\lambda^j$.  Also note that if $A$ and $B$ are similar matrices, then so are $\alpha(g)(A)$ and $\alpha(g)(B)$.
Thus we have
\[\varphi(a)=\alpha(g)(\varphi(a)) \sim
\begin{bmatrix}
 g(\lambda_1) & \ast&\cdots &  \ast  \\
                  0 & g(\lambda_2) &\ddots&\vdots \\
                   \vdots& \ddots &  \ddots &\ast\\
                   0&\cdots& 0&g(\lambda_{\ell})\\
 \end{bmatrix}
 = \begin{bmatrix}
 \lambda_1^j & \ast&\cdots &  \ast  \\
                  0 & \lambda_2^j &\ddots&\vdots \\
                   \vdots& \ddots &  \ddots &\ast\\
                   0&\cdots& 0&\lambda_{\ell}^j\\
\end{bmatrix}
\]
 and so $\chi(a)=\mathrm{Tr}(\varphi(a)) = \sum_{i=1}^{\ell}\lambda_i^j = \mathrm{Tr}(\varphi(a^j)) = \chi(a^j)$, as required.
\end{proof}

The following theorem is the main result of this paper.

\begin{theorem}\label{conj3}
Let $M$ be a finite monoid, $a,b \in M$ and $K$ a field of characteristic $p\geq 0$. Let $T=\mathop{\mathrm{Im}} \mathrm{Gal}(K(\xi_n)/K)\leq \mathbb Z_n^*$ with $n$ the least common multiple of the orders of $p$-regular elements of $M$. Then the following conditions are equivalent:
\begin{enumerate}[a)]
\item $a$ and $b$ are character equivalent over $K$;
\item  $a$ and $b$ are equivalent by $\equiv$, where $\equiv$ is the least equivalence relation in $M$ satisfying:
\begin{enumerate}[i)]
\item $m_1m_2 \equiv m_2m_1, \forall \, m_1, m_2 \in M$
\item $m \equiv m^{\omega+1}_{p'}, \forall \,  m \in M$
\item if $m\in M$ is $p$-regular and $j \in T$, then $m\equiv m^j$;
\end{enumerate}
\item  $a$ and $b$ are equivalent by $\approx$, where $a \approx b$ if and only there exist $x, y \in M$ and $j \in T$ such that
\begin{enumerate}[i)]
\item $xyx=x, yxy=y$
\item $xa^{\omega+1}_{p'}y=(b^{\omega+1}_{p'})^j$
\item $xy=b^{\omega},\, yx=a^{\omega}$.
\end{enumerate}
\end{enumerate}
Moreover, the distinct irreducible characters of $M$ form a basis for the space of $K$-valued functions on $M$ that are constant on $\equiv$-classes.
\end{theorem}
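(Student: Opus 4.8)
The plan is to prove (a) $\Leftrightarrow$ (b) $\Leftrightarrow$ (c) together with the basis assertion by squeezing a chain of inequalities between $|\Irr_K(M)|$ and itself. Write $C$ for the relation of character equivalence over $K$, and for an equivalence relation $R$ on the finite set $M$ let $V_R$ be the space of $K$-valued functions on $M$ constant on $R$-classes, so $\dim_K V_R$ equals the number of $R$-classes, and $R\subseteq R'$ forces $V_{R'}\subseteq V_R$, with equality of these two spaces holding only if $R=R'$ (compare indicator functions of $R$-classes). The relation $\equiv$ is an equivalence relation by definition, and $\approx$ is one too: reflexivity comes from $x=y=a^{\omega}$, $j=1$; symmetry from replacing $(x,y,j)$ by $(y,x,j^{-1})$ with $j^{-1}$ the inverse in $\mathbb{Z}_n^*$, using Lemma~\ref{conjugationlemma} to see $d\mapsto ydx$ inverts $d\mapsto xdy$ on maximal subgroups; and transitivity by composing linking data, $(x_1,y_1,j_1)$ and $(x_2,y_2,j_2)$ yielding $(x_2x_1,\,y_1y_2,\,j_1j_2\bmod n)$, where Lemma~\ref{conjugationlemma} shows the relevant maps are group isomorphisms (so $j$-th powers go to $j$-th powers) and $T\le\mathbb{Z}_n^*$ is closed under multiplication.

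First I would dispatch the implications (c) $\Rightarrow$ (b) $\Rightarrow$ (a). For (b) $\Rightarrow$ (a): parts a), b), c) of Lemma~\ref{characters} say precisely that every character of $M$ over $K$ takes equal values on the two sides of each defining relation i), ii), iii) of $\equiv$; hence every character, in particular every irreducible character, is constant on $\equiv$-classes, i.e.\ $\equiv\,\subseteq\,C$, whence $V_C\subseteq V_{\equiv}$. For (c) $\Rightarrow$ (b): if $a\approx b$ via $(x,y,j)$, then $yx=a^{\omega}$ is the identity of the group $G_{a^{\omega}}$ to which $a^{\omega+1}_{p'}$ belongs, so
\[
 a \;\equiv\; a^{\omega+1}_{p'} \;=\; y\bigl(x\,a^{\omega+1}_{p'}\bigr) \;\equiv\; \bigl(x\,a^{\omega+1}_{p'}\bigr)y \;=\; (b^{\omega+1}_{p'})^{j} \;\equiv\; b^{\omega+1}_{p'} \;\equiv\; b ,
\]
where the outer two $\equiv$ are instances of ii), the middle one of i), and $(b^{\omega+1}_{p'})^{j}\equiv b^{\omega+1}_{p'}$ is an instance of iii) since $b^{\omega+1}_{p'}$ is $p$-regular and $j\in T$. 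Thus $\approx\,\subseteq\,\equiv$, hence $V_{\equiv}\subseteq V_{\approx}$.

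The crux is the inequality $\dim_K V_{\approx}\le|\Irr_K(M)|$. By Theorem~\ref{indcar}, $|\Irr_K(M)|=\sum_{i=1}^{k}|\Irr_K(G_{e_i})|$; by Theorem~\ref{Berman} together with Lemma~\ref{galoisres} — which lets us test $K$-conjugacy in each $G_{e_i}$ using the group $T$ attached to $M$, since the orders of $p$-regular elements of $G_{e_i}$ divide $n$ — $|\Irr_K(G_{e_i})|$ is the number of $K$-conjugacy classes of $p$-regular elements of $G_{e_i}$. I would then exhibit a surjection from $\bigsqcup_{i=1}^{k}\{K\text{-conjugacy classes of }p\text{-regular elements of }G_{e_i}\}$ onto the set of $\approx$-classes of $M$. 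It is well defined: $zgz^{-1}=h^{j}$ in $G_{e_i}$ with $j\in T$ gives $g\approx h$ via $(z,z^{-1},j)$. It is onto: given $a\in M$, set $e=a^{\omega}$ and $g_0=a^{\omega+1}_{p'}\in G_e$; then $a\approx g_0$ via $(e,e,1)$; the idempotent $e$ is $\mathcal D$-equivalent to a unique $e_i$, and choosing $u,v$ with $uvu=u$, $vuv=v$, $uv=e$, $vu=e_i$, Lemma~\ref{conjugationlemma} gives a group isomorphism $G_e\to G_{e_i}$, $d\mapsto vdu$, carrying $g_0$ to a $p$-regular element $g_1=vg_0u$ of $G_{e_i}$ (isomorphisms preserve orders), with $g_0\approx g_1$ via $(v,u,1)$; hence the $\approx$-class of $a$ is the image of the $K$-conjugacy class of $g_1$. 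Therefore $\dim_K V_{\approx}\le\sum_{i}|\Irr_K(G_{e_i})|=|\Irr_K(M)|$. (This map is in fact a bijection, but only the inequality is needed.)

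To conclude, the $|\Irr_K(M)|$ distinct irreducible characters (distinct and linearly independent by Theorem~\ref{indepofchar}) lie in $V_C$, so $|\Irr_K(M)|\le\dim_K V_C$; combining with the inclusions $V_C\subseteq V_{\equiv}\subseteq V_{\approx}$ established above and the crux gives
\[
 |\Irr_K(M)| \;\le\; \dim_K V_C \;\le\; \dim_K V_{\equiv} \;\le\; \dim_K V_{\approx} \;\le\; |\Irr_K(M)| ,
\]
so all of these are equal; since $\approx\,\subseteq\,\equiv\,\subseteq\,C$ and $V_{\approx}=V_{\equiv}=V_C$, we get $\approx\,=\,\equiv\,=\,C$, which is (a) $\Leftrightarrow$ (b) $\Leftrightarrow$ (c). Finally $V_{\equiv}=V_C$ has dimension $|\Irr_K(M)|$ and contains the $|\Irr_K(M)|$ linearly independent irreducible characters, so these form a basis of $V_{\equiv}$. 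I expect the main obstacle to be the idempotent bookkeeping in the onto step — moving an arbitrary element into a $p$-regular group element of the fixed representative subgroup $G_{e_i}$ via Lemma~\ref{conjugationlemma}, and reconciling the group $T$ of $M$ with those of the $G_{e_i}$ via Lemma~\ref{galoisres}; the verification that $\approx$ is transitive is of the same flavour, and everything else is formal once the chain of inequalities is in place.
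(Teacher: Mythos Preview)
Your proof is correct and follows the same overall counting strategy as the paper --- bound the number of character-equivalence classes between $|\Irr_K(M)|$ and itself via Theorem~\ref{indepofchar}, Theorem~\ref{indcar}, Theorem~\ref{Berman}, and Lemma~\ref{galoisres} --- but with one organizational streamlining worth noting. The paper establishes (b) $\Rightarrow$ (c) directly, by checking that $\approx$ satisfies each of the three generating relations of $\equiv$; in particular, verifying $m_1m_2\approx m_2m_1$ requires an explicit construction (with $x=m_1(m_2m_1)^{2k-1}$, $y=(m_2m_1)^km_2$, $k=|M|!$) and some computation. You sidestep this entirely: by making your surjection land in $M/{\approx}$ rather than in $M/{\equiv}$, you get $\dim V_{\approx}\le|\Irr_K(M)|$, and then the single squeeze $|\Irr_K(M)|\le\dim V_C\le\dim V_{\equiv}\le\dim V_{\approx}\le|\Irr_K(M)|$ forces $\approx\,=\,\equiv\,=\,C$ all at once, so (b) $\Rightarrow$ (c) falls out for free. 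This buys a shorter, more uniform argument; the paper's route has the mild independent interest of showing concretely why $\approx$ already satisfies the generators of $\equiv$. The ingredients --- Lemma~\ref{characters}, Lemma~\ref{conjugationlemma}, and the surjection from $\bigsqcup_i p\text{-}\mathrm{reg}(G_{e_i})/{\sim_K}$ --- are the same in both.
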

\begin{proof}
First we prove that $\approx$ is an equivalence relation.  It is reflexive, since given $a \in M$, it is enough to choose $x=y=a^{\omega}$ and $j=1$.

Now suppose that $a \approx b$ with $x,y,j$ as above and put $g=a^{\omega+1}_{p'}$ and $h=b^{\omega+1}_{p'}$.  Let $j'$ be the inverse of $j$ in the group $T\leq \mathbb Z_n^*$.  Since the order of $h$ divides $n$ by choice of $n$, we have $h^{jj'}=h$.  Thus, using Lemma~\ref{conjugationlemma}, we have that
\[yhx=yh^{jj'}x=y(xgy)^{j'}x=yxg^{j'}yx=a^{\omega}g^{j'}a^{\omega}=g^{j'}\]  and so  $b\approx a$.

Finally, if $a \approx b$ and $b \approx c$, there exist $x_1, x_2, y_1, y_2 \in M$ and $j_1, j_2 \in T$ such that:
\begin{enumerate}[a)]
\item $x_iy_ix_i=x_i, y_ix_iy_i=y_i$ for $i=1,2$;
\item $x_1a^{\omega+1}_{p'}y_1=(b^{\omega+1}_{p'})^{j_1}$;
\item $x_2b^{\omega+1}_{p'}y_2=(c^{\omega+1}_{p'})^{j_2}$;
\item $y_1x_1=a^{\omega},\, x_1y_1=b^{\omega}$;
\item $y_2x_2=b^{\omega},\, x_2y_2=c^{\omega}$.
\end{enumerate}
By choosing $x_3=x_2x_1$ and $y_3=y_1y_2 \in M$, we obtain \[x_3=x_2x_1=x_2x_1y_1x_1=x_2b^{\omega}x_1=x_2b^{\omega}b^{\omega}x_1=x_2x_1y_1y_2x_2x_1=x_3y_3x_3\] and similarly $y_3x_3y_3=y_3$. Also, we have $y_3x_3=y_1y_2x_2x_1=y_1b^{\omega}x_1=y_1x_1y_1x_1=a^{\omega}$ and $x_3y_3=x_2x_1y_1y_2=x_2b^{\omega}y_2=x_2y_2x_2y_2=c^{\omega}$.

Finally, putting $g=a^{\omega+1}_{p'}$ , $h=b^{\omega+1}_{p'}$ and $k=c^{\omega+1}_{p'}$, we have
\[x_3gy_3=x_2x_1gy_1y_2 = x_2h^{j_1}y_2 = (x_2hy_2)^{j_1}=(k^{j_2})^{j_1}=k^{j_1j_2}\] where we have again used Lemma~\ref{conjugationlemma}.  Thus $a\approx c$.  This concludes the proof that $\approx$ is an equivalence relation. Next we show that it satisfies i)--iii) of b). This will show that b) implies c).

To show i), suppose that $m_1,m_2\in M$.  We want to show that $m_1m_2\approx m_2m_1$.  If $k=|M|!$, then  $(m_1m_2)^{\omega}=(m_1m_2)^k$ and $(m_2m_1)^{\omega}=(m_2m_1)^k$.  Let $x=m_1(m_2m_1)^{2k-1}$ and $y=(m_2m_1)^k m_2$.  Then \[yx=(m_2m_1)^km_2m_1(m_2m_1)^{2k-1}=(m_2m_1)^{3k}=(m_2m_1)^{\omega}\] and
\begin{align*}
xy&=m_1(m_2m_1)^{2k-1}(m_2m_1)^km_2=m_1(m_2m_1)^{3k-1}m_2\\&=m_1m_2(m_1m_2)^{3k-1}=(m_1m_2)^{3k}=(m_1m_2)^{\omega}.
\end{align*}
Clearly then $xyx=m_1(m_2m_1)^{2k-1}(m_2m_1)^{\omega}=m_1(m_2m_1)^{2k-1}=x$ because $(m_2m_1)^{2k-1}\in G_{(m_2m_1)^{\omega}}$.  Also, we have $yxy=(m_2m_1)^{\omega}(m_2m_1)^{\omega}m_2=y$.
We compute that $y(m_1m_2)^{\omega+1}x= (m_2m_1)^km_2(m_1m_2)^{k+1}m_1(m_2m_1)^{2k-1}=(m_2m_1)^{4k+1}=(m_2m_1)^{\omega+1}$.
  By Lemma~\ref{conjugationlemma}, the assignment $z\mapsto yzx$ is a group isomorphism from $G_{(m_1m_2)^{\omega}}$ to $G_{(m_2m_1)^{\omega}}$ with inverse $w\mapsto xwy$.  Thus $y(m_1m_2)^{\omega+1}_{p'}x=(m_2m_1)^{\omega+1}_{p'}$ and so $m_1m_2\approx m_2m_1$ (with $j=1$).

Condition ii) is trivially verified by taking $x=m^{\omega}=y$ and $j=1$ and iii) is trivially satisfied with $x=m^{\omega}=y$.  This completes the proof that b) implies c).

Next we prove that c) implies b).
Let $a, b\in M$ and suppose that $a\approx b$ with $x,y$ as in c).  Then
\begin{equation*}
a \equiv a^{\omega+1}_{p'}=a^{\omega+1}_{p'}a^{\omega}=a^{\omega+1}_{p'}yx \equiv xa^{\omega+1}_{p'}y
=(b^{\omega+1}_{p'})^j \equiv b^{\omega+1}_{p'} \equiv b.
\end{equation*}

That b) implies a) is immediate from Lemma~\ref{characters}.  For the converse, let
\begin{align*}
K^{M/\equiv} &= \{f\colon M/{\equiv}\to K\} \\
&= \{f\colon M\to K\mid a\equiv b\implies f(a) = f(b)\}
\end{align*}
where the last equality is an abuse of notation.
Then $\dim K^{M/{\equiv}} = |M/{\equiv}|$.
If $\chi$ is a character, then $\chi \in K^{M/{\equiv}}$.  Let $\chi_1,\ldots,\chi_r$ be the characters of the inequivalent irreducible representations of $M$.  By Theorem~\ref{indepofchar}, we have that $\chi_1,\ldots,\chi_r$ are linearly independent.
Thus $r\le \dim K^{M/\equiv} = |M/{\equiv}|$.

\begin{claim}
If $r=|M/{\equiv}|$, then $\equiv$ is  character equivalence.
\end{claim}
\begin{proof}
We already have that $a\equiv b$ implies that $a$ and $b$ are character equivalent. Let
$C$ be the $\equiv$-class of the element $a$. Let \[\delta_C(x) = \begin{cases} 1, & \text{if}\ x\in C\\ 0, & \text{if}\ x\not\in C\end{cases}.\] Under the assumption that $r=|M/{\equiv}|$, we have that $\{\chi_1,\ldots,\chi_r\}$ is a basis for $K^{M/{\equiv}}$.
Therefore, $\delta_C = \sum_{i=1}^rk_i\chi_i$ for some $k_i\in K$.  If $a$ and $b$ are character equivalent, then
\[\delta_C(b) = \sum_{i=1}^rk_i\chi_i(b) = \sum_{i=1}^rk_i\chi_i(a) = \delta_C(a)=1. \]
So $b\in C$ and $a\equiv b$.
\end{proof}

We now complete the proof that a) implies b) by showing that $r\geq |M/{\equiv}|$.  This will also show that $\{\chi_1,\ldots,\chi_r\}$ is a basis for $K^{M/{\equiv}}$.  Let $e_1,\ldots,e_k$ be representatives of the $\mathcal D$-classes of idempotents of $M$.

\begin{claim}\label{claim2}
$|M/{\equiv} | \le |\bigsqcup_{i=1}^kp\text{-}\mathrm{reg}(G_{e_i})/{\sim_K}|$
\end{claim}
\begin{proof}
We define a surjective function $f\colon \bigsqcup_{i=1}^k p\text{-}{\mathrm{reg}}(G_{e_i}) \to M/{\equiv}$ such that if $g,h\in p\text{-}\mathrm{reg}(G_{e_i})$ satisfy $g\sim_K h$, then $f(g)=f(h)$.  The claim will then follow.

Let us put $f(g)=[g]_{\equiv}$. Suppose that $g,h\in G_{e_i}$ with $g\sim_K h$.  Then, applying Lemma~\ref{galoisres}, there exist $x\in G_{e_i}$ and $j\in T$ such that $xgx^{-1}=h^j$.  Putting $y=x^{-1}$, we have that $xyx=x$, $yxy=y$, $xy=e_i=yx$ and $xgy=h^j$ implying that $g\equiv h$.
To show that $f$ is surjective, let $m\in M$.   Then $m_{p'}^{\omega+1}\in p\text{-}{\mathrm{reg}}(G_{m^{\omega}})$.
There exist $i$ such that $m^{\omega}\mathrel{\mathcal{D}}e_i$, and $x,y\in M$ such that $xyx=x, yxy=y, xy=e_i, yx=m^{\omega}$.  By Lemma~\ref{conjugationlemma} there
is an isomorphism $\varphi\colon G_{m^{\omega}}\to G_{e_i}$ with $\varphi(a) = xay$.  So $xm_{p'}^{\omega+1}y=\varphi(m_{p'}^{\omega +1})\in p\text{-}\mathrm{reg}(G_{e_i})$ and
$[m]_{\equiv}= [xm_{p'}^{\omega +1}y]_{\equiv} = f(xm_{p'}^{\omega +1}y)$. This establishes that $f$ is surjective, completing the proof of the claim.
\end{proof}

In light of Claim~\ref{claim2}, Theorem~\ref{Berman} and Theorem~\ref{indcar}, we have
\begin{equation*}
|M/{\equiv}|\leq |\bigsqcup_{i=1}^kp\text{-}\mathrm{reg}(G_{e_i})/{\sim_K}| = \sum_{i=1}^k|\Irr_K(G_{e_i})| = |\Irr_K(M)|=r
\end{equation*}
as required.  This completes the proof of the theorem.
\end{proof}

Let us specialize the result to the case that $K$ is algebraically closed.  In this case $K(\xi_n)=K$ and so $T=\{1\}$.  Thus we recover McAlister's result~\cite[Theorem~2.2]{McAlisterCharacter} and obtain its analogue in positive characteristic.

\begin{cor}
Let $M$ be a finite monoid, $a,b \in M$ and $K$ an algebraically closed field.
\begin{enumerate}[1)]
\item If the characteristic of $K$ is $0$, then the following are equivalent:
\begin{enumerate}[a)]
\item $a$ and $b$ are character equivalent over $K$;
\item there exist $x, y \in M$ such that
\begin{enumerate}[i)]
\item $xyx=x, yxy=y$
\item $xa^{\omega+1}y=b^{\omega+1}$
\item $xy=b^{\omega},\, yx=a^{\omega}$.
\end{enumerate}
\end{enumerate}
\item If the characteristic of $K$ is $p>0$, then the following are equivalent:
\begin{enumerate}[a)]
\item $a$ and $b$ are character equivalent over $K$;
\item there exist $x, y \in M$ such that
\begin{enumerate}[i)]
\item $xyx=x, yxy=y$
\item $xa^{\omega+1}_{p'}y=b^{\omega+1}_{p'}$
\item $xy=b^{\omega},\, yx=a^{\omega}$.
\end{enumerate}
\end{enumerate}
\end{enumerate}
\end{cor}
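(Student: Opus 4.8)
The plan is to obtain both statements as immediate specializations of Theorem~\ref{conj3}. The key observation is that an algebraically closed $K$ contains every root of unity of $\bar K$; in particular $\xi_n\in K$, so $K(\xi_n)=K$, the group $\mathrm{Gal}(K(\xi_n)/K)$ is trivial, and hence $T=\mathop{\mathrm{Im}}\mathrm{Gal}(K(\xi_n)/K)=\{1\}$ inside $\mathbb Z_n^*$. Substituting $T=\{1\}$ into condition c) of Theorem~\ref{conj3}, the only admissible value of $j$ is $1$, so the relation $\approx$ there reduces to: there exist $x,y\in M$ with $xyx=x$, $yxy=y$, $xa^{\omega+1}_{p'}y=b^{\omega+1}_{p'}$, $xy=b^{\omega}$ and $yx=a^{\omega}$. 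By the equivalence of a) and c) in Theorem~\ref{conj3}, this holds precisely when $a$ and $b$ are character equivalent over $K$.

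When the characteristic is $p>0$, this is already the asserted condition, and part 2) follows with no further work. When the characteristic is $0$, I would recall the convention that $g_{p'}=g$ for every group element $g$ of a finite group; applying it to the group elements $a^{\omega+1}$ and $b^{\omega+1}$ (which are group elements, by the remark preceding the definition of $p$-regularity) gives $a^{\omega+1}_{p'}=a^{\omega+1}$ and $b^{\omega+1}_{p'}=b^{\omega+1}$. Substituting these into the reduced form of c) turns condition ii) into $xa^{\omega+1}y=b^{\omega+1}$, which is exactly the statement of part 1).

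I do not anticipate any real obstacle here: the corollary is purely a matter of setting $T=\{1\}$ and then unwinding the $p$-regular decomposition in the two characteristic cases. The only point demanding a little care is the bookkeeping around the subscript $p'$ — namely that it is vacuous when $p=0$, so the $p'$'s may simply be erased, whereas for $p>0$ they must be retained.
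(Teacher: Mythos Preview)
Your proposal is correct and matches the paper's approach exactly: the paper simply observes that for $K$ algebraically closed one has $K(\xi_n)=K$, hence $T=\{1\}$, and then reads off the corollary from Theorem~\ref{conj3}. Your added remark about the $p'$-subscript being vacuous in characteristic~$0$ is the only extra detail, and it is handled correctly.
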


Let us next specialize to a finite field $\mathbb F_q$ of order $q$.  Then $\mathrm{Gal}(\mathbb F_q(\xi_n)/\mathbb F_q)$ is generated by the Frobenius automorphism $c\mapsto c^q$.  Thus $T$ is the subgroup of $\mathbb Z_n^*$ generated by $q$ and so we have the following corollary.

\begin{cor}
Let $M$ be a finite monoid, $a,b \in M$ and $\mathbb F_q$ be a field of order $q$ and characteristic $p$. Then the following conditions are equivalent:
\begin{enumerate}[a)]
\item $a$ and $b$ are character equivalent over $\mathbb F_q$;
\item  there exist $x, y \in M$ and $k\geq 0$ such that
\begin{enumerate}[i)]
\item $xyx=x, yxy=y$
\item $xa^{\omega+1}_{p'}y=(b^{\omega+1}_{p'})^{q^k}$
\item $xy=b^{\omega},\, yx=a^{\omega}$.
\end{enumerate}
\end{enumerate}
\end{cor}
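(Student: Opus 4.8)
The plan is to deduce this corollary directly from the equivalence of conditions a) and c) in Theorem~\ref{conj3}, so that the only thing to do is to rewrite condition c) in the special case $K=\mathbb{F}_q$.

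First I would note that, since $n$ is the least common multiple of the orders of the $p$-regular elements of $M$, we have $p\nmid n$; as $q$ is a power of $p$, it follows that $\gcd(q,n)=1$ and so $q$ determines a well-defined element of $\mathbb{Z}_n^*$. The cyclotomic extension $\mathbb{F}_q(\xi_n)/\mathbb{F}_q$ is cyclic with Galois group generated by the Frobenius automorphism $\sigma\colon c\mapsto c^q$, and by definition of $\theta$ we have $\theta(\sigma)=q\bmod n$. Hence $T=\mathop{\mathrm{Im}}\mathrm{Gal}(\mathbb{F}_q(\xi_n)/\mathbb{F}_q)$ is precisely the cyclic subgroup $\langle q\bmod n\rangle$ of $\mathbb{Z}_n^*$, so that $j\in T$ if and only if $j\equiv q^k\pmod n$ for some integer $k\geq 0$.

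Next I would observe that the exponent appearing in condition c)ii) of Theorem~\ref{conj3} only matters modulo $n$: the element $b_{p'}^{\omega+1}$ is a $p$-regular element of $M$, hence its order divides $n$, so whenever $j\equiv q^k\pmod n$ we have $(b_{p'}^{\omega+1})^j=(b_{p'}^{\omega+1})^{q^k}$. Consequently the existence of $x,y\in M$ and $j\in T$ satisfying c)i)--iii) of Theorem~\ref{conj3} is equivalent to the existence of $x,y\in M$ and $k\geq 0$ satisfying i)--iii) of the present corollary. Combining this reformulation with the equivalence a)~$\Longleftrightarrow$~c) of Theorem~\ref{conj3} yields the claim.

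There is no real obstacle here: the substantive content is entirely contained in Theorem~\ref{conj3}, and the only point requiring care is the identification of the image of $\mathrm{Gal}(\mathbb{F}_q(\xi_n)/\mathbb{F}_q)$ under $\theta$ with the subgroup generated by $q$, which in turn just uses that a finite extension of finite fields is cyclic with the Frobenius as a generator.
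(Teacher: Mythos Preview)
Your proof is correct and follows exactly the approach taken in the paper: the corollary is obtained from Theorem~\ref{conj3} by observing that for $K=\mathbb{F}_q$ the Galois group $\mathrm{Gal}(\mathbb{F}_q(\xi_n)/\mathbb{F}_q)$ is generated by the Frobenius $c\mapsto c^q$, so that $T=\langle q\rangle\leq \mathbb{Z}_n^*$. Your added remark that $(b^{\omega+1}_{p'})^j$ depends only on $j\bmod n$ makes explicit a point the paper leaves implicit, but the argument is otherwise identical.
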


Finally, we specialize to the case that $K=\mathbb Q$.  In this case, $T=\mathbb Z_n^*$.  If $g\in M$ is a group element, then since $|g|$ divides $n$, we have that $\{g^j\mid j\in \mathbb Z_n^*\}$ is precisely the set of generators of $\langle g\rangle$.  From this we obtain our next result.

\begin{cor}
Let $M$ be a finite monoid and $a,b \in M$. Then the following conditions are equivalent:
\begin{enumerate}[a)]
\item $a$ and $b$ are character equivalent over $\mathbb Q$;
\item there exist $x, y \in M$ such that
\begin{enumerate}[i)]
\item $xyx=x, yxy=y$
\item $x\langle a^{\omega+1}\rangle y=\langle b^{\omega+1}\rangle$
\item $xy=b^{\omega},\, yx=a^{\omega}$.
\end{enumerate}
\end{enumerate}
\end{cor}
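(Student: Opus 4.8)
The plan is to derive this corollary from Theorem~\ref{conj3} by specializing to $K=\mathbb{Q}$. Since the cyclotomic extension $\mathbb{Q}(\xi_n)/\mathbb{Q}$ has Galois group canonically isomorphic to $\mathbb{Z}_n^*$, we have $T=\mathbb{Z}_n^*$, and since $p=0$ we have $m_{p'}^{\omega+1}=m^{\omega+1}$ for every $m\in M$. Thus condition (c) of Theorem~\ref{conj3} becomes: there exist $x,y\in M$ and $j\in\mathbb{Z}_n^*$ with (i) $xyx=x$, $yxy=y$, (ii) $xa^{\omega+1}y=(b^{\omega+1})^j$, and (iii) $xy=b^{\omega}$, $yx=a^{\omega}$. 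Since Theorem~\ref{conj3} already gives that character equivalence over $\mathbb{Q}$ is equivalent to this, it suffices to show that, for a fixed pair $x,y$ satisfying (i) and (iii), the existence of such a $j$ is equivalent to the single condition $x\langle a^{\omega+1}\rangle y=\langle b^{\omega+1}\rangle$ appearing in the corollary.

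The bridge is Lemma~\ref{conjugationlemma}. Given $x,y$ with $xyx=x$, $yxy=y$, $xy=b^{\omega}$ and $yx=a^{\omega}$, I would apply the lemma with $e=b^{\omega}$ and $f=a^{\omega}$: the map $\psi(w)=xwy$ is a monoid isomorphism $a^{\omega}Ma^{\omega}\to b^{\omega}Mb^{\omega}$ restricting to a group isomorphism $G_{a^{\omega}}\to G_{b^{\omega}}$. The cyclic group $\langle a^{\omega+1}\rangle$ has identity $a^{\omega}$ and lies inside $G_{a^{\omega}}$, so $\psi$ is a homomorphism on it, and therefore $x\langle a^{\omega+1}\rangle y=\psi(\langle a^{\omega+1}\rangle)=\langle\psi(a^{\omega+1})\rangle=\langle xa^{\omega+1}y\rangle$.

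Finally, I would observe that $b^{\omega+1}$ is a group element whose order divides $n$, so $\{(b^{\omega+1})^j : j\in\mathbb{Z}_n^*\}$ is exactly the set of generators of the cyclic group $\langle b^{\omega+1}\rangle$ (an elementary computation, using the Chinese remainder theorem to realize each residue coprime to $|b^{\omega+1}|$ by an integer coprime to $n$). Consequently, there is a $j\in\mathbb{Z}_n^*$ with $xa^{\omega+1}y=(b^{\omega+1})^j$ if and only if $xa^{\omega+1}y$ generates $\langle b^{\omega+1}\rangle$, i.e.\ if and only if $\langle xa^{\omega+1}y\rangle=\langle b^{\omega+1}\rangle$, which by the previous paragraph is exactly $x\langle a^{\omega+1}\rangle y=\langle b^{\omega+1}\rangle$. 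Combining, condition (c) of Theorem~\ref{conj3} holds if and only if there exist $x,y$ satisfying (i), (ii), (iii) of the corollary, and hence is equivalent to character equivalence over $\mathbb{Q}$. There is no real obstacle here: the proof is a direct specialization, and the only points needing a line of care are the number-theoretic identification of the powers $(b^{\omega+1})^j$, $j\in\mathbb{Z}_n^*$, with the generators of $\langle b^{\omega+1}\rangle$, and the verification that $\langle a^{\omega+1}\rangle$ sits inside $G_{a^{\omega}}$ so that Lemma~\ref{conjugationlemma} may be applied to it.
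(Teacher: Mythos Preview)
Your argument is correct and follows exactly the approach sketched in the paper: specialize Theorem~\ref{conj3} to $K=\mathbb{Q}$, use that $T=\mathbb{Z}_n^*$ and $m_{p'}^{\omega+1}=m^{\omega+1}$, and identify $\{(b^{\omega+1})^j:j\in\mathbb{Z}_n^*\}$ with the set of generators of $\langle b^{\omega+1}\rangle$. You supply more detail than the paper does---in particular the explicit invocation of Lemma~\ref{conjugationlemma} to get $x\langle a^{\omega+1}\rangle y=\langle xa^{\omega+1}y\rangle$ and the surjectivity of $\mathbb{Z}_n^*\to\mathbb{Z}_{|b^{\omega+1}|}^*$---but the substance is identical.
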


We remark that $K^{M/{\equiv}}$ is a ring with pointwise addition and multiplication (in fact, a $K$-algebra) and that the proof of Theorem~\ref{conj3} also establishes the following result.

\begin{cor}\label{charringiso}
Let $M$ be a finite monoid and $K$ a field.  Let $e_1,\ldots, e_k$ be representatives of the $\mathcal D$-classes of idempotents of $M$.  Then
the map \[\psi\colon \bigsqcup_{i=1}^kp\text{-}\mathrm{reg}(G_{e_i})/{\sim_K}\to M/{\equiv}\] given by $\psi([g]_{\sim_K}) = [g]_{\equiv}$ is a bijection and hence induces a ring isomorphism \[\Psi\colon K^{M/{\equiv}}\to \prod_{i=1}^kK^{p\text{-}\mathrm{reg}(G_{e_i})/{\sim_K}}\] given by \[\Psi(f)= \left(f\psi_{|_{p\text{-}\mathrm{reg}(G_{e_1})/{\sim_K}}},\ldots,f\psi_{|_{p\text{-}\mathrm{reg}(G_{e_k})/{\sim_K}}}\right)\] for $f\in K^{M/{\equiv}}$.
\end{cor}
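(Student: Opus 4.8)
The plan is to observe that essentially all of the real work has already been carried out inside the proof of Theorem~\ref{conj3}, and that the corollary follows from a cardinality count together with a purely formal identification of function algebras. So the strategy is: (1) extract from that proof the surjection onto $M/{\equiv}$ and check it descends to $\psi$; (2) upgrade surjectivity to bijectivity by counting; (3) transport the bijection to the algebras.

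First I would recall from the proof of Claim~\ref{claim2} that the assignment $g\mapsto [g]_{\equiv}$ defines a surjective function $\bigsqcup_{i=1}^k p\text{-}\mathrm{reg}(G_{e_i})\to M/{\equiv}$, and that it sends $g$ and $h$ to the same $\equiv$-class whenever they lie in a common $G_{e_i}$ with $g\sim_K h$ (this was shown there via Lemma~\ref{galoisres}). Hence it factors through $\bigsqcup_{i=1}^k p\text{-}\mathrm{reg}(G_{e_i})/{\sim_K}$, producing a well-defined surjection $\psi([g]_{\sim_K})=[g]_{\equiv}$, which is exactly the map in the statement. To see $\psi$ is injective, I would count: by Theorem~\ref{conj3}, $\equiv$ is character equivalence, and the final chain of (in)equalities in its proof yields the equality $|M/{\equiv}|=r=|\Irr_K(M)|$; Theorem~\ref{indcar} gives $|\Irr_K(M)|=\sum_{i=1}^k|\Irr_K(G_{e_i})|$, and Theorem~\ref{Berman} gives $|\Irr_K(G_{e_i})|=|p\text{-}\mathrm{reg}(G_{e_i})/{\sim_K}|$. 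Therefore the domain and codomain of $\psi$ are finite sets of the same cardinality, so the surjection $\psi$ is a bijection.

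For the ring isomorphism I would invoke two standard facts: a bijection $\phi\colon X\to Y$ of sets induces a $K$-algebra isomorphism $K^Y\to K^X$ by $f\mapsto f\circ\phi$, since pointwise addition and multiplication are preserved; and there is a canonical $K$-algebra isomorphism $K^{\bigsqcup_i X_i}\cong\prod_i K^{X_i}$ given by restriction to each block. Composing these, with $X_i=p\text{-}\mathrm{reg}(G_{e_i})/{\sim_K}$ and $\phi=\psi$, produces precisely the map $\Psi(f)=\bigl(f\psi_{|_{X_1}},\ldots,f\psi_{|_{X_k}}\bigr)$; it is a ring homomorphism because both pullback along $\psi$ and restriction to a block respect the pointwise operations, and it is bijective as a composite of bijections. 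I do not expect a genuine obstacle: the one point requiring care is that the count uses the \emph{equality} $r=|M/{\equiv}|$, which is the conclusion of Theorem~\ref{conj3} rather than the a priori inequality $r\le|M/{\equiv}|$ that appears at the start of its proof; once that is in hand, the corollary is formal.
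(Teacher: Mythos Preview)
Your argument is correct and matches the paper's intent: the corollary is stated there without a separate proof, preceded only by the remark that ``the proof of Theorem~\ref{conj3} also establishes the following result,'' and you have accurately extracted and assembled the pieces---well-definedness and surjectivity of $\psi$ from Claim~\ref{claim2}, bijectivity from the equality $|M/{\equiv}|=r=\sum_i|\Irr_K(G_{e_i})|=\sum_i|p\text{-}\mathrm{reg}(G_{e_i})/{\sim_K}|$ established at the end of that proof, and the formal passage to function algebras. Your only explicit caveat, that one needs the \emph{equality} $r=|M/{\equiv}|$ rather than the a~priori inequality, is exactly right and is precisely what the chain of (in)equalities at the close of the proof of Theorem~\ref{conj3} provides.
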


\section{Virtual characters}
A mapping $f\colon M\to K$ in $K^{M/{\equiv}}$ is called a \emph{virtual character} if $f$ is a $\mathbb Z$-linear combination of irreducible characters or equivalently $f=\chi_1-\chi_2$ where $\chi_1,\chi_2$ are characters of $M$.  Note that the virtual characters form a subring of $K^{M/{\equiv}}$ because the product of two characters is the character of the tensor product of the corresponding representations~\cite{McAlisterCharacter,RhodesZalc}.

We will need a characterization of virtual characters in Section~\ref{s:application}. This extends results of McAlister~\cite{McAlisterCharacter} proven for the field of complex numbers.

\begin{lemma}\label{restricttochar}
Let $\chi$ be a character of a monoid $M$ and let $e\in E(M)$.  Then $\chi_{|_{G_e}}$ is either identically zero or a character of $G_e$.
\end{lemma}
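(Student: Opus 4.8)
The plan is to work directly with a matrix representation $\varphi\colon M\to M_r(K)$ having $\chi_\varphi=\chi$, and to use the idempotent matrix $\varphi(e)$ to split the representation space. First I would record that $\varphi(e)$ is an idempotent of $M_r(K)$, so $K^r = W\oplus V$ with $W=\varphi(e)K^r$ its image and $V=\ker\varphi(e)$; here $\varphi(e)$ acts as the identity on $W$ and as $0$ on $V$.

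The next step is to observe that $eMe$ acts block-diagonally with respect to this splitting. Indeed, for $m\in eMe$ we have $eme=m$, hence $\varphi(e)\varphi(m)\varphi(e)=\varphi(m)$. From this one deduces $\varphi(m)V=0$ (if $v\in V$ then $\varphi(m)v=\varphi(e)\varphi(m)\varphi(e)v=0$) and $\varphi(m)W\subseteq W$ (if $w=\varphi(e)u\in W$ then $\varphi(m)w=\varphi(e)(\varphi(m)\varphi(e)w)\in W$). Choosing a basis of $K^r$ adapted to $W\oplus V$, every $\varphi(m)$ with $m\in eMe$ takes the form $\left[\begin{smallmatrix}\varphi'(m)&0\\0&0\end{smallmatrix}\right]$, where $\varphi'\colon eMe\to M_d(K)$ is a monoid homomorphism, $d=\dim W$, and $\varphi'(e)=I_d$.

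Finally I would restrict to $G_e$. Since $\varphi'$ sends the identity $e$ of $G_e$ to $I_d$, the restriction $\varphi'|_{G_e}$ is a genuine representation of the group $G_e$, and the block form gives $\chi_{|_{G_e}}=\mathrm{Tr}\circ\varphi|_{G_e}=\mathrm{Tr}\circ\varphi'|_{G_e}=\chi_{\varphi'|_{G_e}}$, a character of $G_e$. The only caveat is the degenerate case $d=0$, i.e.\ $\varphi(e)=0$, in which $W=0$ and $\chi_{|_{G_e}}\equiv 0$ — this is the other alternative in the statement. There is no serious obstacle here; the single point requiring (minor) care is exactly this dichotomy, namely making sure that when the restriction is not forced to vanish one has produced a nonzero representation of $G_e$ rather than a degree-$0$ object. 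It is worth noting that neither irreducibility of $\varphi$ nor finiteness of $M$ enters the argument.
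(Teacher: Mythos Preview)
Your proof is correct and follows essentially the same approach as the paper: split $K^r$ as $\mathrm{Im}\,\varphi(e)\oplus\ker\varphi(e)$ using the idempotent $\varphi(e)$, observe that $G_e$ (via $eG_ee=G_e$) preserves the image and annihilates the kernel, and read off the block form to obtain a genuine representation of $G_e$ whose character is $\chi|_{G_e}$. You give a bit more detail (the explicit verification of the block structure and the degenerate case $\varphi(e)=0$), but the argument is the same.
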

\begin{proof}
Let $\rho\colon M\to M_r(K)$ be a representation whose character is $\chi$.  Then $\rho(e)$ is an idempotent matrix and hence we have an internal direct sum decomposition $K^r=\mathop{\mathrm{Im}}\rho(e)\oplus\ker \rho(e)$.  Clearly, $G_e=eG_ee$ acts by automorphisms on $\mathop{\mathrm{Im}}\rho(e)$ and annihilates $\ker \rho(e)$.  Thus \[\rho_{|_{G_{e}}} \sim
\begin{bmatrix}
\hat{\rho} & 0\\
0 & 0\\
\end{bmatrix}\]
where $\hat{\rho}$ is a representation of $G_e$ and so $\chi_{|_{G_e}}=\chi_{\hat{\rho}}$ is a character.
\end{proof}

We can now characterize the ring of virtual characters.  If $M$ is a monoid, let $\ch_K(M)$ denote the ring of virtual characters of $M$ (over $K$). The above lemma shows that there is a natural ring homomorphism $\rho_e\colon \ch_K(M)\to \ch_K(G_e)$ given by restriction for each $e\in E(M)$.

\begin{theorem}\label{virtualchars}
Let $M$ be a finite monoid and $K$ a field.  Let $e_1,\ldots, e_k$ be representatives of the $\mathcal D$-classes of idempotents of $M$.  Then $f\in K^{M/{\equiv}}$ is a virtual character if and only if $f_{|_{G_{e_i}}}$ is a virtual character for $i=1,\ldots, k$.  Moreover, there is a ring isomorphism
\[\Phi\colon \ch_K(M)\to \prod_{i=1}^k\ch_K(G_{e_i})\] given by \[\Phi(f) = (f_{|_{G_{e_1}}},\ldots,f_{|_{G_{e_k}}})\] for $f\in \ch_K(M)$.
\end{theorem}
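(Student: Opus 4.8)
The plan is to verify that $\Phi$ is a well-defined ring homomorphism, then prove it is bijective by exhibiting its matrix, in bases of irreducible characters, as a block lower-triangular integer matrix with identity diagonal blocks, and finally to deduce the ``if and only if'' statement from surjectivity of $\Phi$ together with injectivity of the isomorphism $\Psi$ of Corollary~\ref{charringiso}.

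First, $\Phi$ is well defined and a ring homomorphism: restriction of functions $K^{M/{\equiv}}\to K^{G_{e_i}}$ is a ring homomorphism for the pointwise operations, and for $f=\chi_1-\chi_2\in\ch_K(M)$ each $\chi_j|_{G_{e_i}}$ is $0$ or a character by Lemma~\ref{restricttochar}, so $f|_{G_{e_i}}\in\ch_K(G_{e_i})$. By Theorem~\ref{indepofchar} the irreducible characters $\chi_1,\dots,\chi_r$ of $M$ are $K$-linearly independent, hence a $\mathbb Z$-basis of $\ch_K(M)$; similarly the irreducible characters of each $G_{e_i}$ form a $\mathbb Z$-basis of $\ch_K(G_{e_i})$, and by the bijection of Theorem~\ref{indcar} their total number is again $r$.

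Next I would compute the matrix of $\Phi$ in these bases. Order $e_1,\dots,e_k$ so that $e_i\preceq e_j$ implies $i\le j$, and order $\chi_1,\dots,\chi_r$ so that those with apex $e_1$ come first, then those with apex $e_2$, and so on. Suppose $\chi$ has apex $e_j$. By Corollary~\ref{rescar}, $\chi|_{G_{e_j}}=\chi_{\hat\varphi}$ is a single irreducible character of $G_{e_j}$, and as $\chi$ runs over the irreducible characters of $M$ with apex $e_j$ these exhaust $\Irr_K(G_{e_j})$ bijectively by Theorem~\ref{indcar}. For $e_i\preceq e_j$ with $i\ne j$, the restriction $\chi|_{G_{e_i}}$ is $0$ or a character by Lemma~\ref{restricttochar}, hence a non-negative integer combination of $\Irr_K(G_{e_i})$. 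For $e_i\not\preceq e_j$ -- which by the chosen ordering includes every $i>j$ -- we have $Me_iM\not\subseteq Me_jM$, so $\chi|_{G_{e_i}}=0$ by the vanishing observation used in the proof of Theorem~\ref{indepofchar}. Thus, after reordering the irreducible characters of each $G_{e_i}$ suitably, the matrix of $\Phi$ is block lower-triangular with identity diagonal blocks and all entries in $\mathbb Z$. Such a matrix has determinant $1$ and integer inverse, so $\Phi\colon\ch_K(M)\to\prod_{i=1}^k\ch_K(G_{e_i})$ is an isomorphism of abelian groups, hence a ring isomorphism; this is the ``moreover'' assertion. For the equivalence, the ``only if'' direction is exactly the well-definedness of $\Phi$ above. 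Conversely, if $f\in K^{M/{\equiv}}$ has $f|_{G_{e_i}}\in\ch_K(G_{e_i})$ for each $i$, then by surjectivity of $\Phi$ there is $g\in\ch_K(M)$ with $g|_{G_{e_i}}=f|_{G_{e_i}}$ for all $i$, so $f-g\in K^{M/{\equiv}}$ vanishes on every $G_{e_i}$; since $\Psi(h)$ is the tuple of restrictions of $h$ to the $p$-regular elements of the $G_{e_i}$, we get $\Psi(f-g)=0$, and injectivity of $\Psi$ gives $f=g\in\ch_K(M)$.

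The main obstacle is the matrix computation, i.e.\ establishing the block-triangular structure with identity diagonal blocks: this requires assembling the description of irreducible monoid characters via their apexes -- the Clifford-Munn-Ponizovsky bijection together with Corollary~\ref{rescar} for the diagonal blocks, the vanishing of $\chi|_{G_{e_i}}$ when $e_i$ is not below the apex, and Lemma~\ref{restricttochar} for integrality of the subdiagonal blocks. Once this is in place, the final step -- that an integer block-triangular matrix with identity diagonal blocks is invertible over $\mathbb Z$ -- is routine.
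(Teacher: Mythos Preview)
Your approach is correct in outline and genuinely different from the paper's. The paper proves surjectivity of $\Phi$ by induction downward along $\preceq$ (starting at $e_k=1$, where each irreducible of $G_1$ extends by zero to $M$, and at each lower $e_i$ correcting the higher components using the induction hypothesis), and then handles the ``if'' direction by a \emph{separate} induction on the number $s$ of irreducible characters in the $K$-linear expansion of $f$, showing at each stage that the coefficients at a minimal apex lie in $\mathbb Z\cdot 1$. Your block-triangular matrix computation packages both bijectivity and integrality into one step, and your deduction of the ``if'' direction from surjectivity of $\Phi$ together with injectivity of $\Psi$ is also tidier than the paper's second induction.

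There is, however, a slip in the vanishing step. For $\chi$ with apex $e_j$ and $g\in G_{e_i}$, Theorem~\ref{indcar}(i) gives $\chi(g)=0$ exactly when $e_j\notin MgM=Me_iM$, i.e.\ when $e_j\not\preceq e_i$ --- not when $e_i\not\preceq e_j$ as you wrote. With your linear extension of $\preceq$ this yields vanishing for $i<j$ rather than $i>j$; the matrix is then block lower-triangular with identity diagonal blocks as you claim, and the rest of the argument goes through unchanged. One further minor point: in characteristic $p>0$ the irreducible characters are not $\mathbb Z$-linearly independent (since $p\cdot\chi=0$ in $K^{M/{\equiv}}$), so ``$\mathbb Z$-basis'' should be read as a basis over the prime subring $\mathbb Z\cdot 1_K$ of $K$; your unipotent-triangular matrix argument is unaffected by this.
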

\begin{proof}
Lemma~\ref{restricttochar} implies that the restriction of a virtual character of $M$ to $G_{e_i}$ is a virtual character and hence  $\Phi$ is a  well-defined homomorphism.  We prove that $\Phi$ is surjective by induction using the order $\preceq$ defined after Lemma~\ref{conjugationlemma}.  More precisely, order $e_1,\ldots,e_k$ so that $e_i\preceq e_j$ implies $i\leq j$.  First note that in a finite monoid $M$ with identity $1$, one has that $G_1$ is the group of invertible elements of $M$ and that $1\in MmM$ if and only if $m\in G_1$ (cf.~\cite[Proposition~1.2]{TilsonXI}). In particular, $M\setminus G_1$ is closed under multiplication and $1$ is not $\mathcal D$-equivalent to any other idempotent.  Also $Me_jM\subseteq M=M1M$ and so $1=e_k$.

If $\varphi\in \Irr_K(G_{e_k})=\Irr_K(G_1)$, then $\varphi$ extends to an irreducible representation $\varphi'$ of $M$ by putting  \[\varphi'(m)=\begin{cases}\varphi(m), & \text{if}\ m\in G_{e_k}\\ 0, & \text{else.}\end{cases}\] Then $\Phi(\chi_{\varphi'}) = (0,0,\ldots, 0,\chi_{\varphi})$.  It follows that $\{0\}\times\cdots\times \{0\}\times \ch_K(G_{e_k})$ is in the image of $\Phi$.

Assume that $\{0\}\times \cdots \times \ch_K(G_{e_{i+1}})\times\cdots\times \ch_K(G_{e_k})$ is in the image of $\Phi$ by induction.  Let $\varphi\in \Irr_K(G_{e_i})$.  Then by Theorem~\ref{indcar} and Corollary~\ref{rescar}, there is an irreducible representation $\varphi'$ of $M$ whose character $\chi$ satisfies $\chi_{|_{G_{e_i}}}=\chi_{\varphi}$ and $\chi(m)=0$ if $e_i\notin MmM$. It follows that $\Phi(\chi)=(0,\ldots,0,\chi_{\varphi_i},f_{i+1},\ldots, f_k)$ where $f_j\in \ch_K(G_{e_j})$ for $i+1\leq j\leq k$.  But then using the induction hypothesis, we deduce $(0,\ldots,0,\chi_{\varphi_i},0,\ldots, 0)$ is in the image of $\Phi$ and so $\{0\}\times \cdots \times \ch_K(G_{e_{i}})\times\cdots\times \ch_K(G_{e_k})$ is in the image of $\Phi$.  We conclude by induction that $\Phi$ is surjective.

Injectivity of $\Phi$ follows from the injectivity of $\Psi$ in Corollary~\ref{charringiso}.

Suppose that $f\in K^{M/{\equiv}}$ satisfies $f_{|_{G_{e_i}}}\in \ch_K(G_{e_i})$ for $i=1,\ldots, k$. Write $f=c_1\chi_1+\cdots+c_s\chi_s$ with $\chi_1,\ldots, \chi_s$ irreducible characters of $M$ and $c_1,\ldots, c_s\in K\setminus \{0\}$.  We prove by induction on $s$ that $f\in \ch_K(M)$.  If $s=0$, there is nothing to prove.

Let $e_i$ be a minimal (with respect to $\preceq$) element among the apexes of $\chi_1,\ldots, \chi_s$.  Without loss of generality, we may assume that $\chi_1,\ldots, \chi_t$ have apex $e_i$ and $\chi_{t+1},\ldots, \chi_s$ have apex different than $e_i$.  Then, as in the proof of Theorem~\ref{indepofchar}, we have that $\chi_j(G_{e_i}) = 0$ if $j>t$ and that $\chi_1,\ldots,\chi_t$ restrict to distinct irreducible characters of $G_{e_i}$.  Since $f_{|_{G_{e_i}}}$ is a virtual character, we deduce by the linear independence of the irreducible characters of $G_{e_i}$ over $K$ that $c_1,\ldots,c_t\in \mathbb Z\cdot 1$. Moreover, $c_{t+1}\chi_{t+1}+\cdots+c_s\chi_s=f-(c_1\chi_1+\cdots+\cdot +c_t\chi_t)$ still restricts to a virtual character at each $G_{e_j}$ because $f$ does, each $\chi_j$ does (by Lemma~\ref{restricttochar}) and $c_1,\ldots, c_t\in \mathbb Z\cdot 1$.  Thus $f-(c_1\chi_1+\cdots+\cdot +c_t\chi_t)$ is a virtual character by induction, and hence so is $f$ because $c_1,\ldots, c_t\in \mathbb Z\cdot 1$.
\end{proof}

\section{An application: a theorem of Berstel and Reutenauer}\label{s:application}
Let $A$ be a finite set and $A^*$ the free monoid on $A$, that is, the set of all words in the alphabet $A$.  The empty word will be denoted $1$.  A subset $L\subseteq A^*$ is often called a (formal) \emph{language}.  In automata theory, a language $L\subseteq A^*$ is called \emph{regular} (or \emph{rational}) if it is accepted by a finite state automaton or equivalently if there is a finite monoid $M$ and a surjective monoid homomorphism $\eta\colon A^*\to M$ such that $L=\eta^{-1}(\eta(L))$~\cite{EilenbergA,Eilenberg}.  The \emph{zeta function} of $L$ is
\[\zeta_L(x) = \exp\left({\sum_{n=1}^{\infty}\frac{a_n}{n}x^n}\right)\] where $a_n$ is the number of words in $L$ of length $n$; see~\cite{BerstelReutenauer,BRzeta}. The length of a word $w$ is denoted $|w|$.

Let $K$ be a field and let $K{\langle A\rangle}$ be the ring of polynomials in non-commuting variables $A$ with coefficients in $K$, that is, the free $K$-algebra on $A$.  Let $K{\llangle A\rrangle}$ be the ring of formal power series in non-commuting variables $A$ with coefficients in $K$.  A power series is \emph{rational} if it belongs to the smallest $K$-subalgebra of $K{\llangle A\rrangle}$ containing the polynomials and closed under inversion of power series with non-zero coefficient of $1$.

One has a right $K{\langle A\rangle}$-module structure on $K{\llangle A\rrangle}$ by defining
\[\left(\sum_{w\in A^*}c_ww\right)\cdot a = \sum_{w\in A^*}c_w wa\] for $a\in A$.  A celebrated theorem of Sch\"utzenberger states that a power series $f$ is rational if and only if the $K{\langle A\rangle}$-submodule generated by $f$ is finite dimensional over $K$~\cite{BerstelReutenauer}.  Perrin defines a power series to be \emph{completely reducible} if the representation of $A^*$ associated to the $K{\langle A\rangle}$-submodule generated by $f$ is a direct sum of irreducible representations~\cite{Perrincomplred}.  The completely reducible series form a $K{\langle A\rangle}$-submodule.

Let us say that a power series $f\in K{\llangle A\rrangle}$ is a \emph{trace series} if there is a character $\chi\colon A^*\to K$ of a finite dimensional representation of $A^*$ over $K$ such that $f=\sum_{w\in A^*}\chi(w)\cdot w$.
Perrin observes that trace series, and hence linear combinations of trace series,  are completely reducible~\cite{Perrincomplred}; see also~\cite{BerstelReutenauer}.

If $L\subseteq A^*$ is a language, then its \emph{characteristic series} is \[f_L=\sum_{w\in L}w\in K{\llangle A\rrangle}.\] If $L$ is regular, then it is well known that $f_L$ is rational~\cite{BerstelReutenauer}.  A language $L$ is said to be \emph{cyclic} if:
\begin{enumerate}[i)]
\item $u\in L\iff u^s\in L$ for all $s>0$
\item $uv\in L\iff vu\in L$.
\end{enumerate}

The key example is the following. Let $\mathcal X\subseteq A^{\mathbb Z}$ be a symbolic dynamical system, that is, a non-empty closed subspace (in the product topology) invariant under the shift map $\sigma\colon A^{\mathbb Z}\to A^{\mathbb Z}$ defined by \[\sigma(\cdots a_{-2}a_{-1}.a_0a_1\cdots)= \cdots a_{-2}a_{-1}a_0.a_1\cdots\] (see~\cite{MarcusandLind} for background on symbolic dynamics, including notation).  Let $L$ be the set of all words $w\in A^*$ such that $\cdots ww.ww\cdots$ is a periodic point of $\mathcal X$. Then $L$ is a cyclic language.  If $\mathcal X$ is a shift of finite type, or more generally a sofic shift, then $L$ will be regular.  The zeta function for $L$ gives the zeta function of the shift, and the rationality of the zeta function of a sofic shift follows from the rationality of the zeta function of any regular cyclic language~\cite{BRzeta,BerstelReutenauer}.

Berstel and Reutenauer~\cite{BRzeta,BerstelReutenauer} proved that if $L$ is a regular cyclic language, then the characteristic series $f_L$ of $L$ is a $\mathbb Z$-linear combination of trace series over any field $K$, and hence is completely reducible.  They also used this to prove the rationality of $\zeta_L$. We show that this result is an immediate corollary of Theorem~\ref{conj3} and Theorem~\ref{virtualchars}.  Perrin proved, using the results of McAlister~\cite{McAlisterCharacter}  that $f_L$ is a $K$-linear combination of trace series in the case that $K$ is an algebraically closed field of characteristic $0$.

\begin{theorem}[Berstel-Reutenauer~\cite{BRzeta}]\label{reut}
Let $L\subseteq A^*$ be a regular cyclic language and let $K$ be a field.  Then the characteristic series $f_L$ of $L$ is a $\mathbb Z$-linear combination of trace series.
\end{theorem}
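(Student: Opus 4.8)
The plan is to use regularity of $L$ to pass from $A^*$ to a finite monoid and then apply Theorem~\ref{conj3} and Theorem~\ref{virtualchars}. Fix a finite monoid $M$ and a surjective homomorphism $\eta\colon A^*\to M$ with $L=\eta^{-1}(P)$, where $P=\eta(L)$, and let $\delta_P\colon M\to K$ be the characteristic function of $P$, so that $f_L=\sum_{w\in A^*}\delta_P(\eta(w))\,w$. It then suffices to prove that $\delta_P$ is a virtual character of $M$: if $\delta_P=\chi_1-\chi_2$ with $\chi_1,\chi_2$ characters of $M$, then $f_L=\sum_w\chi_1(\eta(w))\,w-\sum_w\chi_2(\eta(w))\,w$ exhibits $f_L$ as a difference of trace series, because for a representation $\varphi_i$ of $M$ with character $\chi_i$ the composite $\varphi_i\circ\eta$ is a finite-dimensional representation of $A^*$ with character $\chi_i\circ\eta$.

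Next I would record what the two defining conditions of a cyclic language say about $P$. Since $\eta$ is onto and $L=\eta^{-1}(P)$, for any $m\in M$ and any word $u$ with $\eta(u)=m$ we have $m\in P\iff u\in L\iff u^s\in L\iff m^s\in P$ for every $s>0$, and similarly $m_1m_2\in P\iff m_2m_1\in P$ for all $m_1,m_2\in M$. The second statement is exactly relation i) in the definition of $\equiv$ in Theorem~\ref{conj3}. For relations ii) and iii) I would note that $m_{p'}^{\omega+1}$ is a positive power of $m$ — indeed it is the element $m^z$ with $z>0$ produced in the proof of Lemma~\ref{characters} — and that for $p$-regular $m$ and $j\in T\leq\mathbb Z_n^*$ one has $m^j=m^{j'}$ for any positive integer $j'$ with $j'\equiv j\bmod n$. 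Hence the first displayed equivalence above also gives closure of $P$ under ii) and iii), so $P$ is a union of $\equiv$-classes and $\delta_P\in K^{M/{\equiv}}$.

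By Theorem~\ref{virtualchars} it now suffices to show that, for each idempotent $e_i$ representing a $\mathcal D$-class, the restriction of $\delta_P$ to $G_{e_i}$ — that is, the characteristic function $\delta_{P\cap G_{e_i}}$ of the subset $P\cap G_{e_i}\subseteq G_{e_i}$ — is a virtual character of the finite group $G_{e_i}$. Here closure of $P$ under powers makes this immediate: if $P\cap G_{e_i}\neq\emptyset$, pick $h$ in it; since $G_{e_i}$ is a finite group with identity $e_i$, some positive power of $h$ equals $e_i$, so $e_i\in P$ by the equivalence of the previous paragraph, and then the same reasoning applied to an arbitrary $g\in G_{e_i}$ shows $g\in P$. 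Thus $P\cap G_{e_i}$ is either empty or all of $G_{e_i}$, so $\delta_{P\cap G_{e_i}}$ is either the zero function or the trivial character — in both cases a virtual character. Theorem~\ref{virtualchars} then yields $\delta_P\in\ch_K(M)$, which completes the proof.

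The argument is short once Theorems~\ref{conj3} and~\ref{virtualchars} are available; the one place that needs care is the middle step, where one must check that the less transparent operations appearing in the definition of $\equiv$ — passing from $m$ to $m_{p'}^{\omega+1}$ and to the Galois twists $m^j$ — are genuinely instances of raising $m$ to a positive power, so that cyclicity of $L$ (which speaks only of honest powers $u\mapsto u^s$) really does force $\delta_P$ to be constant on $\equiv$-classes.
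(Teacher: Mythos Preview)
Your proof is correct and follows essentially the same approach as the paper: the paper isolates the monoid computation as a separate lemma (Lemma~\ref{cyclicsubset}), but the content is identical --- show that the pushforward $P=\eta(L)$ inherits the cyclic conditions, deduce $\delta_P\in K^{M/{\equiv}}$ via Theorem~\ref{conj3}, observe that each $G_{e_i}$ is either contained in or disjoint from $P$, and conclude by Theorem~\ref{virtualchars}. Your version is in fact slightly more explicit in justifying why the operations $m\mapsto m_{p'}^{\omega+1}$ and $m\mapsto m^j$ for $j\in T$ are genuine positive powers, which the paper leaves implicit.
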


Note that since a non-negative integral combination of trace series is again a trace series, the theorem really asserts that $f_L$ is a difference of trace series.

Theorem~\ref{reut} is a straightforward consequence of the following lemma about finite monoids.

\begin{lemma}\label{cyclicsubset}
Let $M$ be a finite monoid and $K$ a field.  Let $X$ be a subset of $M$ such that:
\begin{enumerate}[i)]
\item $m\in X\iff m^s\in X$ for all $s>0$;
\item $m_1m_2\in X \iff m_2m_1\in X$.
\end{enumerate}
Then the characteristic function $I_X\colon M\to K$ of $X$ defined by
\[I_X(m)=\begin{cases} 1, & \text{if}\ m\in X \\ 0, &  \text{if}\ m\not\in X \end{cases}\]
is a virtual character of $M$ over $K$.
\end{lemma}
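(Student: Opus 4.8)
The plan is to apply Theorem~\ref{virtualchars}: I would first check that $I_X\in K^{M/{\equiv}}$, and then check that the restriction of $I_X$ to each maximal subgroup $G_{e_i}$ is a virtual character. The second point will turn out to be almost immediate, so essentially all the work is in the first, and even there very little is needed beyond hypotheses i) and ii) of the lemma.

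To show $I_X\in K^{M/{\equiv}}$, recall that $\equiv$ is the \emph{least} equivalence relation satisfying conditions i)--iii) of Theorem~\ref{conj3}~b); hence it suffices to check that $I_X$ takes equal values on the two sides of each of those three defining relations. For relation i) this is exactly hypothesis ii) of the lemma, namely $m_1m_2\in X\iff m_2m_1\in X$. For relations ii) and iii) the key observation is that $m^{\omega+1}_{p'}$ and (for $p$-regular $m$) $m^j$ are \emph{positive powers} of $m$: the proof of Lemma~\ref{characters} exhibits $m^{\omega+1}_{p'}=m^z$ for a suitable integer $z>0$, and if $m$ is $p$-regular then $|m|$ divides $n$, so $m^j$ coincides with $m^{j_0}$ for any positive integer $j_0\equiv j\pmod n$. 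Hypothesis i) of the lemma, applied to these powers, then gives $I_X(m)=I_X(m^z)=I_X(m^{\omega+1}_{p'})$ and $I_X(m)=I_X(m^{j_0})=I_X(m^j)$. Thus $I_X$ is constant on $\equiv$-classes.

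Next I would treat the restriction to a fixed maximal subgroup $G_{e_i}$. If $X\cap G_{e_i}=\emptyset$, then $I_X|_{G_{e_i}}$ is the zero function, which is a virtual character. Otherwise, pick $g\in X\cap G_{e_i}$; since $g$ generates a cyclic subgroup of $G_{e_i}$, its idempotent power is the identity $e_i$, so $e_i=g^{|g|}$ with $|g|>0$, and hypothesis i) of the lemma forces $e_i\in X$. Applying the same reasoning to an arbitrary $h\in G_{e_i}$ via $e_i=h^{|h|}$ shows $h\in X$; hence $G_{e_i}\subseteq X$ and $I_X|_{G_{e_i}}$ is the trivial character $\mathbf{1}_{G_{e_i}}$. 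In both cases $I_X|_{G_{e_i}}$ is a virtual character of $G_{e_i}$ over $K$, so Theorem~\ref{virtualchars} yields that $I_X$ is a virtual character of $M$ over $K$.

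I do not expect a serious obstacle; the only points requiring a little care are recognizing $m^{\omega+1}_{p'}$ and $m^j$ as genuine powers of $m$, so that hypothesis i) can be invoked, and the small observation that hypothesis i), through the identity $h^{|h|}=e_i$ valid for every $h\in G_{e_i}$, collapses $X\cap G_{e_i}$ to $\emptyset$ or all of $G_{e_i}$ — which is precisely what trivializes the maximal-subgroup case and lets Theorem~\ref{virtualchars} do the rest.
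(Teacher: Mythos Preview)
Your proposal is correct and follows essentially the same route as the paper: first verify $I_X\in K^{M/{\equiv}}$ by checking the three generating relations of $\equiv$ (using that $m^{\omega+1}_{p'}$ and $m^j$ are positive powers of $m$, so hypotheses i) and ii) apply), and then observe that on each $G_{e_i}$ the set $X$ is empty or everything, so the restriction is $0$ or the trivial character, whence Theorem~\ref{virtualchars} finishes. The paper's proof is slightly terser but structurally identical.
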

\begin{proof}
We retain the notation of Theorem~\ref{conj3}. By hypotheses we have, for $m_1,m_2,m\in M$,
\begin{alignat*}2
I_X(m_1m_2) &= I_X(m_2m_1)&\\
I_X(m)&= I_X(m_{p'}^{\omega+1})&\\
I_{X}(m)&=I_X(m^j) & \quad \text{for}\ j\in T.
\end{alignat*}
Therefore,  $I_X\in K^{M/{\equiv}}$  by Theorem~\ref{conj3}.

 If $e\in E(M)$ and $g\in G_e$, then $g^k=e$ for some $k>0$.  Thus $g\in X$ if and only if $e\in X$.  We conclude that either $G_e\subseteq X$ or $G_e\cap X=\emptyset$.  In the latter case, $I_X$ restricts to $0$ on $G_e$ and hence is a virtual character; in the former $I_X$ restricts to the character of the trivial representation of $G_e$.  We conclude that $I_X$ is a virtual character by Theorem~\ref{virtualchars}.
\end{proof}

We can now prove the theorem of Berstel and Reutenauer.

\begin{proof}[Proof of Theorem~\ref{reut}]
Let $\eta\colon A^*\to M$ be a surjective monoid homomorphism with $L=\eta^{-1}(\eta(L))$ and $M$ finite.   Let $X=\eta(L)$.
 Then
 \begin{equation}\label{equa1}
 f_L=\sum_{w\in L}w=\sum_{w\in A^*}I_X(\eta(w))\cdot w=\sum_{w\in A^*}I_X\circ \eta(w)\cdot w
 \end{equation}
where $I_X$ is as in Lemma~\ref{cyclicsubset}.  Since $L$ is a cyclic language and $L=\eta^{-1}(X)$, one easily checks that $X$ satisfies the hypotheses of Lemma~\ref{cyclicsubset}  and hence $I_X=\chi_1-\chi_2$ where  $\chi_1,\chi_2$ are characters of $M$.  Indeed, we have \[\eta(w)\in X\iff w\in L\iff w^s\in L\iff \eta(w)^s\in X\] for all $s>1$ and $w\in A^*$, and \[\eta(u)\eta(v)\in X\iff uv\in L\iff vu\in L\iff \eta(v)\eta(u)\in X\] for all $u,v\in A^*$.

Observe that if $\chi\colon M\to K$ is a character, then $\chi\circ\eta\colon A^*\to K$ is a character.  Thus $I_X\circ \eta = \chi_1\circ \eta-\chi_2\circ\eta$ and so $f_L$ is a $\mathbb Z$-linear combination of trace series by \eqref{equa1}.
\end{proof}

Because it is so pretty, we recall how Theorem~\ref{reut} implies the rationality of $\zeta_L$ for a regular cyclic language $L$. Also, this will highlight the importance of using virtual characters.

\begin{theorem}[Berstel-Reutenauer~\cite{BRzeta}]
Let $L\subseteq A^*$ be a regular cyclic language.  Then $\zeta_L$ is rational.
\end{theorem}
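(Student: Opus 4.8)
The plan is to deduce the rationality of $\zeta_L$ from Theorem~\ref{reut} by expressing $\zeta_L$ as a finite product of terms of the form $\det(I - x M_i)^{\pm 1}$, where the $M_i$ are the matrices of the representations whose traces appear in the decomposition $f_L = \chi_1 - \chi_2$ coming from Theorem~\ref{reut}. The point is that such determinants are polynomials in $x$, so $\zeta_L$ becomes a ratio of polynomials.

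First I would recall the basic power-series identity: for any finite-dimensional representation $\rho\colon A^* \to M_d(K)$ with character $\chi_\rho$, one has
\[
\exp\left(\sum_{n=1}^\infty \frac{a_n^{(\rho)}}{n} x^n\right) = \frac{1}{\det(I - x\, \overline{\rho})}
\]
where $a_n^{(\rho)} = \sum_{|w|=n} \chi_\rho(w)$ and $\overline{\rho} = \sum_{a\in A}\rho(a)$ is the matrix representing the "sum of letters." This follows from the classical formula $\det(I - xB)^{-1} = \exp(\sum_{n\ge 1} \mathrm{Tr}(B^n) x^n / n)$ applied to $B = \overline{\rho}$, together with the observation that $\mathrm{Tr}(\overline{\rho}^{\,n}) = \sum_{|w|=n}\mathrm{Tr}(\rho(w)) = \sum_{|w|=n}\chi_\rho(w)$ by expanding the $n$-th power and using linearity of trace. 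Strictly speaking this identity holds over $\mathbb{C}$; over a general field one works formally, noting both sides are well-defined power series over the prime field and the identity is an identity of formal power series with integer (or prime-field) coefficients, hence valid over any $K$ — alternatively one simply interprets the exponential/logarithm formally.

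Next, by Theorem~\ref{reut} we have $f_L = \chi_1 - \chi_2$ with $\chi_1 = \chi_{\rho_1}\circ\eta$ and $\chi_2 = \chi_{\rho_2}\circ\eta$ trace series coming from finite-dimensional representations $\rho_1, \rho_2$ of $M$ (pulled back to $A^*$ via $\eta$). The coefficient $a_n$ of $\zeta_L$ satisfies $a_n = \sum_{|w|=n} I_X(\eta(w)) = \sum_{|w|=n}\chi_1(w) - \sum_{|w|=n}\chi_2(w)$, so
\[
\zeta_L(x) = \exp\left(\sum_{n\ge 1}\frac{a_n}{n}x^n\right) = \frac{\det(I - x\,\overline{\rho_2\circ\eta})}{\det(I - x\,\overline{\rho_1\circ\eta})},
\]
a quotient of two polynomials in $x$; hence $\zeta_L$ is rational. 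The main obstacle — really the only subtlety — is justifying the exponential/determinant identity over an arbitrary field $K$ of positive characteristic, where $\exp$ and the division by $n$ are not literally defined; the fix is to treat everything as a formal identity in $\mathbb{Q}[[x]]$ (or to observe that $\zeta_L$ is defined via its formula as an element of $\mathbb{Q}[[x]]$ once the $a_n$ are integers, which they are since $a_n$ counts words) and to note that the determinant identity is an identity of formal power series with rational coefficients that then specializes. One should also remark that $\zeta_L$ only depends on the integer sequence $(a_n)$, which is intrinsic to $L$ and independent of $K$, so it suffices to establish rationality working over, say, $\mathbb{Q}$ or $\mathbb{C}$, where the analytic manipulations are unproblematic; then Theorem~\ref{reut} over $\mathbb{Q}$ gives the required integer combination of trace series, and the determinant formula finishes the proof.
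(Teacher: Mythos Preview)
Your proposal is correct and follows essentially the same route as the paper: apply Theorem~\ref{reut} to write $f_L$ as a difference of trace series, set $M_i=\sum_{a\in A}\rho_i(a)$, use $\mathrm{Tr}(M_i^n)=\sum_{|w|=n}\chi_i(w)$ to express $a_n$ as $\mathrm{Tr}(M_1^n)-\mathrm{Tr}(M_2^n)$, and then invoke the identity $\exp\bigl(\sum_{n\ge 1}\mathrm{Tr}(B^n)x^n/n\bigr)=\det(I-xB)^{-1}$. The paper simply works over $\mathbb{C}$ from the outset, which sidesteps your detour about positive characteristic; your own final remark that $\zeta_L$ depends only on the integer sequence $(a_n)$, so one may as well invoke Theorem~\ref{reut} over $\mathbb{Q}$ or $\mathbb{C}$, is exactly the right fix and matches the paper's choice.
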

\begin{proof}
By Theorem~\ref{reut} there exist characters $\chi_1,\chi_2\colon A^*\to \mathbb C$ such that \[f_L = \sum_{w\in L}w= \sum_{w\in A^*}\left(\chi_1(w)-\chi_2(w)\right)w.\]  Let $\varphi_i\colon A^*\to M_{n_i}(\mathbb C)$ be representations with $\chi_i=\chi_{\varphi_i}$, for $i=1,2$.
Let $M_1=\sum_{a\in A}\varphi_1(a)$ and $M_2=\sum_{a\in A}\varphi_2(a)$.  A simple induction argument shows that
\begin{equation}\label{adjmatrix}
M_i^n=\sum_{|w|=n}\varphi_i(w)
\end{equation}
for $i=1,2$.
If $a_n$ is the number of words of length $n$ in $L$, then, for $n\geq 1$,
\begin{align*}
a_n &= \sum_{|w|=n}\chi_1(w)-\chi_2(w)\\ &=\sum_{|w|=n}\mathrm{Tr}(\varphi_1(w))-\mathrm{Tr}(\varphi_2(w))
\\ &= \mathrm{Tr}\left(\sum_{|w|=n}\varphi_1(w)\right)- \mathrm{Tr}\left(\sum_{|w|=n}\varphi_2(w)\right)\\
&= \mathrm{Tr}(M_1^n)-\mathrm{Tr}(M_2^n)
\end{align*}
by \eqref{adjmatrix}.

Therefore, \[\zeta_L(x)=\exp\left(\sum_{n=1}^{\infty}\frac{\mathrm{Tr}(M_1^n)-\mathrm{Tr}(M_2^n)}{n}x^n\right)
=\frac{\exp\left(\sum_{n=1}^{\infty}\frac{\mathrm{Tr}(M_1^n)}{n}x^n\right)}{\exp\left(\sum_{n=1}^{\infty}\frac{\mathrm{Tr}(M_2^n)}{n}x^n\right)}\]
and so it suffices to show that if $B$ is a $k\times k$-matrix over $\mathbb C$, then the series
\[g(x)=\exp\left(\sum_{n=1}^{\infty}\frac{\mathrm{Tr}(B^n)}{n}x^n\right)\] is rational.  But if $\lambda_1,\ldots, \lambda_k$ are the eigenvalues of $B$ with multiplicities, then
\[g(x) = \exp\left(\sum_{n=1}^{\infty}\frac{\lambda_1^n+\cdots+\lambda_k^n}{n}x^n\right)=\prod_{i=1}^k\exp\left(-\log (1-\lambda_ix)\right)=\prod_{i=1}^k\frac{1}{1-\lambda_ix}\]
is rational (in fact, it is $1/\det(1-xB)$).
\end{proof}

\section*{Acknowledgments}
The third author would like to thank Geoff Robinson~\cite{Geoffanswer} for pointing him toward Berman's paper~\cite{Berman}.

\def\malce{\mathbin{\hbox{$\bigcirc$\rlap{\kern-7.75pt\raise0,50pt\hbox{${\tt
  m}$}}}}}\def\cprime{$'$} \def\cprime{$'$} \def\cprime{$'$} \def\cprime{$'$}
  \def\cprime{$'$} \def\cprime{$'$} \def\cprime{$'$} \def\cprime{$'$}
  \def\cprime{$'$} \def\cprime{$'$}

\end{document}